\title[ ]{Inhomogeneous Diophantine approximation in the coprime setting}
\author{ Svetlana Jitomirskaya}
\address[ Svetlana Jitomirskaya]{ Department of Mathematics, University of California, Irvine, California 92697-3875, USA}
\email{szhitomi@math.uci.edu}
\author{Wencai Liu}
\address[Wencai Liu]{Department of Mathematics, University of California, Irvine, California 92697-3875, USA} \email{liuwencai1226@gmail.com}
\address{Current address: Department of Mathematics, Texas A\&M University, College Station, TX 77843-3368, USA}
\newcommand{\phisymbol}{\xi}
\newcommand{\R}{\mathbb{R}}
\newcommand{\Z}{\mathbb{Z}}
\newcommand{\Q}{\mathbb{Q}}
\newcommand{\T}{\mathbb{T}}
\theoremstyle{plain}
\newtheorem{theorem}{Theorem}[section]
\newtheorem{corollary}[theorem]{Corollary}
\newtheorem{lemma}[theorem]{Lemma}
\theoremstyle{definition}
\newtheorem{remark}[theorem]{Remark}
\begin{document}
\renewcommand{\epsilon}{\phisymbol}

 \newcommand{\N}{\mathbb{N}}

\begin{abstract}

 Given $n\in\N$ and $x,\gamma\in\R$, let
   \begin{equation*}
    ||\gamma-nx||^\prime=\min\{|\gamma-nx+m|:m\in\Z,  \gcd (n,m)=1\},
   \end{equation*}
Two conjectures in the coprime inhomogeneous Diophantine approximation state that
for any irrational number $\alpha$ and almost every $\gamma\in \R$,
\begin{equation*}
    \liminf_{n\to \infty}n||\gamma -n\alpha||^{\prime}=0
\end{equation*}
and that there exists $C>0$, such that for all  $\alpha\in \R\backslash \Q$ and $\gamma\in [0,1)$ ,
 \begin{equation*}
    \liminf_{n\to \infty}n||\gamma -n\alpha||^{\prime} < C.
 \end{equation*}
 We prove the first conjecture and disprove the second one.

\end{abstract}
\maketitle

 \section{Introduction}

Let $\psi:\N\to \R^+$ and $\alpha\in\R\backslash\Q, \gamma\neq 0.$
A classical Diophantine approximation  problem studies the existence of infinitely many pairs $(p,q)$ of integers such that
\begin{equation}\label{psi}
    |\gamma-q\alpha+p|\leq \psi(q).
\end{equation}

It is referred to as homogeneous if $\gamma=0$ and inhomogeneous if
$\gamma\neq 0$. See \cite{mem} for the discussion of known
results and references.

Questions of this type have applications, among other things, to
several areas of dynamical systems and to the spectral
theory of quasiperiodic Schr\"odinger operators
(e.g. \cite{aj,ajm,jl1,jl2}).
The inhomogeneous problem above can be understood in the metric sense: a.e. $\gamma$, and in the
uniform sense: all $\gamma$.

Coprime inhomogeneous approximation asks the same questions about
infinitely many coprime pairs $(p,q)$. This question has been linked to the density exponents of lattice
orbits in $\R^2$, in \cite{laurent2012inhomogeneous}.

For the classical uniform setting,
Minkowski Theorem guarantees that for any irrational $\alpha\in \R$ and $\gamma\notin \alpha\Z+\Z$, there are
infinitely many pairs $(p,q)$ of   integers such that
\begin{equation}\label{14}
    |\gamma-q\alpha+p|\leq \frac{1}{4|q|}.
\end{equation}

Grace \cite{grace1918note} showed that $1/4$ in \eqref{14} is sharp, and Khintchine \cite{cusick1994inhomogeneous}
showed that
\begin{equation*}
    \liminf_{|q|\to \infty}|q|\; ||q\alpha-\gamma||\leq \frac{1}{4}(1-4\lambda(\alpha)^2)^{\frac{1}{2}},
 \end{equation*}
where
\begin{equation*}
    \lambda( \alpha)= \liminf_{|q|\to \infty}|q|\;||q\alpha||,
  \end{equation*}
  and $||x||=\text{dist}(x,\Z)$.

Uniform inhomogeneous coprime approximation was studied by Chalk and
Erd\'{o}s who proved  \cite{ce}  that for any  irrational $\alpha\in\R$
and for any $\gamma$  there are infinitely many pairs of
coprime integers $(p,q)$ such that (\ref{psi}) holds with $\psi(q)=
(\frac {\log q}{\log\log q})^2\frac{1}{q}.$

 Laurent and Nogueira \cite{laurent2012inhomogeneous} conjectured that a result similar to
Minkowski's theorem holds also for the inhomogeneous coprime
approximation, namely that there exists $C>0$ such that
  for any  for any  irrational $\alpha\in\R$
and for any $\gamma$ there are infinitely many pairs of coprime integers $(p,q)$ with
 \begin{equation*}
    |\gamma-q\alpha+p|\leq\frac{C }{ {|q|}}.
 \end{equation*}

In other words the conjecture is that $(\frac {\log q}{\log\log q})^2$ in Chalk-Erd\H{o}s
can be replaced by $C.$ Such a result would clearly be optimal up to determining the optimal $C.$

The work of Chalk and Erd\H{o}s was forgotten by the community until
recently and the problem was studied in several
papers (e.g. \cite{Alan}), where results somewhat
weaker than in \cite{ce} were obtained by different methods.
The best positive result towards this conjecture remains the one in \cite{ce}.

Our first result in the present paper is to show that such $C$ does
not exist. This shows
 that the coprime requirement leads to fundamental differences in the quality of
 approximation for the inhomogeneous setting.
\begin{theorem}\label{thmexp}
For any constant $C$, there exists $(\alpha,\gamma)\in [0,1)^2$ with
$\alpha\in  \R\backslash \Q  $ and $ \gamma\notin \alpha\Z+\Z$ such that
the inequality
 \begin{equation*}
    |\gamma-q\alpha+p|\leq\frac{C }{ {q}}.
 \end{equation*}
only has    finitely  many coprime solutions  $(p,q)\in \N^2$.
\end{theorem}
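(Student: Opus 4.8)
The plan is to reduce the statement to a purely arithmetic construction and then build the pair $(\alpha,\gamma)$ by a simultaneous recursion on the continued fraction expansion of $\alpha$ and the Ostrowski expansion of $\gamma$ with respect to $\alpha$. First the reduction: fix $C$, and note that for $q>2C$ the interval $[q\alpha-\gamma-C/q,\,q\alpha-\gamma+C/q]$ has length $<1$, so it contains at most one integer, which then is the only candidate numerator $p=p(q)$, and it exists exactly when $\|\gamma-q\alpha\|\le C/q$. Call such $q$ \emph{dangerous}. Thus it suffices to produce $\alpha\in(0,1)\setminus\Q$ and $\gamma\in[0,1)\setminus(\alpha\Z+\Z)$ for which $\gcd\bigl(p(q),q\bigr)>1$ for all sufficiently large dangerous $q$. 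Writing $\gamma=\sum_{k\ge1}c_k(q_{k-1}\alpha-p_{k-1})$ with $p_k/q_k$ the convergents of $\alpha=[0;a_1,a_2,\dots]$, the condition $\gamma\notin\alpha\Z+\Z$ is arranged simply by keeping infinitely many $c_k\neq0$ (avoiding the degenerate expansions), and $\alpha$ is automatically irrational since the continued fraction is infinite.

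Next I would pin down the dangerous set level by level. By the three–distance theorem together with the theory of best inhomogeneous approximations, the size of $q\|\gamma-q\alpha\|$ is controlled, up to bounded factors, by the convergents of $\alpha$ and the Ostrowski digits $c_k$: near level $t$ the dangerous denominators are organized into finitely many families of the shape
\[
q=Q_t+m\,q_{t-1}+(\text{lower-order terms}),\qquad p(q)=P_t+m\,p_{t-1}+(\text{lower-order terms}),
\]
where $(P_t,Q_t)$ is the inhomogeneous convergent assembled from $c_1,\dots$, and $m$ runs over a short interval whose length is of order $\sqrt{C\,a_t}$. The precise version of this — in particular, that no dangerous denominator escapes these families — is where the partial quotients must be taken large relative to $C$. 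Because Minkowski's theorem must be respected and $\gamma\notin\alpha\Z+\Z$, these families are nonempty at infinitely many levels, so the dangerous set is genuinely infinite and has to be killed at every level.

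The arithmetic engine is this: a prime $\ell$ divides $\gcd(P_t+mp_{t-1},\,Q_t+mq_{t-1})$ iff $\ell$ divides the "resultant" $\Delta_t:=P_tq_{t-1}-Q_tp_{t-1}$ and $m$ lies in a single residue class mod $\ell$; moreover, as one varies the secondary (lower-order) index the resultant runs through \emph{consecutive} integers $\Delta_t,\Delta_t\mp1,\Delta_t\mp2,\dots$, precisely because consecutive convergents satisfy $p_{t-1}q_{t-2}-p_{t-2}q_{t-1}=\pm1$. So the recursion proceeds: with $a_1,\dots,a_{t-1}$ and $c_1,\dots,c_{t-1}$ already fixed (hence $P_t,Q_t$, all earlier convergents, and $\Delta_t$ determined), I steer $(p_{t-1},q_{t-1})$ and $(P_t,Q_t)$ along the available lattice directions — i.e.\ exploit the freedom in (a revised) $a_{t-1}$ and in $c_{t-1}$, then in $a_t$ and $c_t$ — so that $\Delta_t$ and the relevant consecutive shifts of it become divisible by a product of small primes whose \emph{forced} residue classes cover the entire $m$-range that occurs at level $t$; the actual large values of the parameters are then fixed compatibly by the Chinese Remainder Theorem. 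Passing to the limit, $\alpha=[0;a_1,a_2,\dots]$ and $\gamma=\sum c_k(q_{k-1}\alpha-p_{k-1})$ give a pair for which every sufficiently large dangerous $(p(q),q)$ has $\gcd>1$, which is what we want.

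I expect the main obstacle to be making the covering step work \emph{uniformly in $t$}. The number of primes — equivalently the size of the CRT modulus — needed to cover an $m$-range of length $\sim\sqrt{C\,a_t}$ grows with $a_t$, while the arithmetic room available at step $t$ also grows with the partial quotients; the whole argument therefore depends on a careful interleaving of the growth rates of the $a_t$ against the required moduli, and on a sufficiently sharp (three–distance) description of the dangerous families, so that nothing escapes the finitely many families that the CRT step controls. This balancing act, rather than any individual estimate, is the heart of the matter.
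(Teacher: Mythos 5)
Your overall philosophy matches the paper's (produce $\alpha$ and $\gamma$ recursively so that every ``dangerous'' $q$, i.e.\ every $q$ with $\|\gamma-q\alpha\|\le C/q$, has its unique candidate numerator $p(q)=\lfloor q\alpha\rfloor$ sharing a prime factor with $q$, the congruences being arranged by the Chinese Remainder Theorem). But the proposal has a genuine gap exactly at the step you yourself flag as ``the heart of the matter'': you allow the dangerous denominators at level $t$ to form families $q=Q_t+mq_{t-1}+\cdots$ with $m$ ranging over an interval of length $\sim\sqrt{C a_t}$, and then you must cover this \emph{growing} range by residue classes attached to primes dividing $\Delta_t$ and its consecutive shifts, with the number of required primes (hence the CRT modulus) growing with $a_t$, while the congruence conditions have to be imposed using only the digits not yet fixed (your suggestion to ``revise'' $a_{t-1}$ is not available once the recursion has passed level $t-1$). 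You give no mechanism for this sieve-like covering to work uniformly in $t$, and no argument that the new congruence requirements at level $t$ are compatible with those already imposed at earlier levels; as written this is a plan for a proof, not a proof.

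The paper's construction shows that this obstacle is not to be ``balanced'' but designed away, by two choices your scheme lacks. First, $\gamma$ is taken as $\bigcap_k I_k$ with $I_k$ centered at $b_k\alpha$, where $b_{k+1}\approx q_{k+1}/2$ and $\|\gamma-b_k\alpha\|\le 1/q_k$: the inhomogeneous approximation is kept at borderline Minkowski quality, never much better, and then (Theorem \ref{keythmdi}) every dangerous $q\in[b_k,b_{k+1})$ is forced to be $b_{k+1}-aq_k$ or $b_k+aq_k+bq_{k-1}$ with $|a|,|b|\le C(M)$ --- a set of offsets of \emph{bounded} cardinality, not one of size $\sqrt{Ca_t}$. (Your growing range comes precisely from permitting $\|\gamma-Q_t\alpha+P_t\|$ to be too small.) Second, all partial quotients are taken divisible by a fixed product of primes $P$, so that $q_k\equiv 1$ and $p_k\equiv 0$ or $1\bmod P$ for every $k$; the congruence data of the offsets is therefore stationary in $k$, and a single fixed grid of primes $p^{t,j}$, $|t|,|j|\le C_1$ (with $b_k\equiv t$ and $\lfloor b_k\alpha\rfloor\equiv j \bmod p^{t,j}$, arranged once and propagated by the induction $b_{k+1}=b_k+l_kPq_k$ together with the equidistribution Lemma \ref{lema4.2}) kills every dangerous pair at every level. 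Without these two ideas --- bounded offset structure and stationary residues --- the uniform covering you defer is precisely what remains unproved, so the proposal does not establish the theorem.
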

\begin{remark}
Actually, for both $\alpha$ and $\gamma$ corresponding bad sets can be shown to be dense and uncountable, see Theorem \ref{thmexp2}.
\end{remark}

Given $n\in\N$ and $x\in\R$, define
   \begin{equation*}
    ||x-n\alpha||^\prime=\min\{|x-n\alpha-m|:m\in\Z, (n,m)=1\},
   \end{equation*}
where $(n,m)$ is the largest positive common divisor of $n$ and $m$.

Addressing the inhomogeneous coprime approximation problem from the metric point,
Laurent and Nogueira proved that  (\ref{psi}) has infinitely many
coprime solutions $(p,q)$ for almost every $(\alpha,\gamma)\in \R^2$
provided $\sum \psi(n)=\infty.$ In particular, there are infinitely
many coprime solutions for almost every $(\alpha,\gamma)\in \R^2$ for
$\psi(n)=c/n.$ Laurent and Nougeira \cite{laurent2012inhomogeneous} conjectured that the same is true
on each fiber for a fixed $\alpha$, and they proved that
\begin{equation}\label{gal}
    \liminf_{n\to \infty}|n||\gamma -n\alpha||^{\prime}\leq 2
\end{equation}

for $\alpha$ such that $\sum_{k\geq
  0}\frac{1}{\max(1,\log q_k)}=\infty,$ where $q_k$ are denominators
of continued fraction approximants of $\alpha,$ and a.e. $\gamma.$
Condition (\ref{gal}) is essential for the proof of \cite{laurent2012inhomogeneous} because it
requires an application of Gallaher's theorem.

Our second result in this paper is a proof of (a stronger version
of) the above conjecture for all irrational $\alpha.$

We prove
\begin{theorem}\label{mainthm}
For any irrational number $\alpha$,
\begin{equation*}
    \liminf_{n\to \infty}n||\gamma -n\alpha||^{\prime}=0
\end{equation*}
holds for almost every $\gamma\in \R$.
\end{theorem}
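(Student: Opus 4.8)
The plan is to reduce the statement to a Borel--Cantelli type argument on the circle. Fix the irrational $\alpha$ with continued fraction denominators $q_k$. For a target rate $\psi(n) = \varepsilon/n$ with $\varepsilon>0$ small, I want to show that for a.e. $\gamma$ there are infinitely many $n$ with $||\gamma - n\alpha||' \le \varepsilon/n$; since $\varepsilon$ is arbitrary this gives $\liminf n \,||\gamma-n\alpha||' = 0$. The key point distinguishing this from the classical (non-coprime) problem is that $||\gamma - n\alpha||' \le \varepsilon/n$ requires not just that $\gamma - n\alpha$ is within $\varepsilon/n$ of \emph{some} integer, but within $\varepsilon/n$ of an integer $m$ coprime to $n$. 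So the first step is to understand, for a given $n$, the set $G_n = \{\gamma \in \T : ||\gamma - n\alpha||' \le \varepsilon/n\}$: it is a union, over residues $m \bmod n$ with $\gcd(n,m)=1$ (lifted appropriately), of short arcs of length $2\varepsilon/n$ centered at $n\alpha + m$. Because the relevant integers $m$ in a window of length $O(1)$ around $n\alpha$ number $O(1)$, one must be careful: for a \emph{single} $n$ the measure of $G_n$ is only $\asymp \varepsilon/n$, and $\sum_n \varepsilon/n = \infty$ but the naive divergence Borel--Cantelli fails because the $G_n$ are far from independent.

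The core of the argument should be a \emph{clustering / localization over a block of $n$'s tied to a continued fraction scale}. Fix a level $k$ and consider $n$ running over an interval of length comparable to $q_{k+1}$ starting at a multiple of $q_k$, say $n = j q_k + r$. For such $n$, $n\alpha$ is controlled: $q_k\alpha$ is within $1/q_{k+1}$ of an integer $p_k$, so $n\alpha \approx r\alpha + j p_k \pmod 1$ up to an error $\le j/q_{k+1} < 1$. The idea is that as $j$ varies over such a block and $r$ is held fixed (or varies over a short range), the points $n\alpha \bmod 1$ sweep out an arithmetic-progression-like pattern, while the coprimality constraint $\gcd(n,m)=1$ can be satisfied for a positive proportion of the $n$ in the block by choosing $r$ (or a prime factor of $n$) suitably — e.g. restrict to $n$ prime, or to $n$ in a fixed residue class mod small primes, so that the set of admissible $m$ near $n\alpha$ always includes one coprime to $n$. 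This converts the problem into: for a positive-density subset $S_k$ of integers $n \asymp q_{k+1}$, the arcs $I_n$ of length $2\varepsilon/n$ about $n\alpha$ (now \emph{guaranteed} to lie inside $G_n$, because the center is automatically coprime-admissible) have total measure $\gg \varepsilon \sum_{n\in S_k} 1/n \gg \varepsilon$, bounded below uniformly in $k$, and — crucially — these arcs become \emph{quasi-equidistributed / quasi-independent} as $k\to\infty$ because the spacing $1/q_k$ between consecutive $n\alpha$ shrinks. A quasi-independence estimate $\meas(I_n \cap I_{n'}) \lesssim \meas(I_n)\meas(I_{n'})$ for $n,n'$ in the same block with $|n-n'|$ not too small, combined with a Paley--Zygmund / second-moment argument, then yields that $\meas\big(\bigcup_{n\in S_k} I_n\big) \ge c(\varepsilon) > 0$ with $c(\varepsilon)$ independent of $k$. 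Letting $A_k = \bigcup_{n\in S_k} I_n$ and noting the $A_k$ for widely separated $k$ are themselves nearly independent (they live at incomparable scales), one concludes $\meas(\limsup_k A_k) \ge c(\varepsilon) > 0$; then a zero--one law (the event $\{\liminf n\,||\gamma-n\alpha||' \le \varepsilon\}$ is, up to measure zero, invariant under a suitable ergodic action, or one runs the argument inside every small subinterval) upgrades this to full measure, and intersecting over $\varepsilon = 1/\ell$, $\ell\to\infty$, finishes the proof.

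The main obstacle — and the place where the coprime setting genuinely bites — is the interaction between the coprimality constraint $\gcd(n,m)=1$ and the Diophantine constraint $|\gamma - n\alpha - m| \le \varepsilon/n$. One cannot simply "throw away" the $n$ for which no coprime $m$ is available near $n\alpha$: one needs a quantitative lower bound on the density of good $n$ in each block, uniform in $k$. The cleanest route is probably a sieve-type estimate: among $n$ in an interval $[N, N+L]$, for all but a $o(1)$-proportion the window $(n\alpha - 1, n\alpha+1)$ contains an integer coprime to $n$ — this should follow from the fact that a random integer in a short interval is coprime to $n$ with probability $\prod_{p\mid n}(1-1/p) \ge c/\log\log n$, together with the near-equidistribution of $n\alpha \bmod 1$. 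Handling the harmonic weights $1/n$ so that the total measure stays bounded below by $c(\varepsilon)$ rather than decaying like $1/\log$, while simultaneously keeping enough quasi-independence for the second-moment method, is the delicate balance; getting $\liminf = 0$ (rather than merely $\liminf \le C$) requires that $c(\varepsilon)$ not collapse as $\varepsilon \to 0$, which in turn forces the block length $L = L(k,\varepsilon)$ and the choice of $S_k$ to be arranged so that the first and second moments scale correctly. I expect this bookkeeping, rather than any single hard inequality, to be the crux.
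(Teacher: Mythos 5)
Your overall strategy (arcs around orbit points with coprime-admissible integer part, blocks tied to continued-fraction scales, a second-moment/quasi-independence Borel--Cantelli) is the right general shape, but the proposal leaves unproved exactly the step that constitutes the real content of the theorem. For $\gamma\in[0,1)$ the admissible integer $m$ is not something you can choose: up to $\pm1$ it is forced to be $\lfloor n\alpha\rfloor$, so the whole problem reduces to showing that the set $J_n=\{k<q_{n+1}:(\lfloor k\alpha\rfloor,k)=1\}$ has positive density \emph{and} that $\{k\alpha\}$ equidistributes along it, quantitatively and uniformly over test intervals. This is the paper's Theorem \ref{thm1}: $\#\{k\in J_{n}:\{k\alpha\}\in I\}=(\tfrac{6}{\pi^2}|I|+o(1))q_{n+1}$ along a subsequence $\{n_k\}$ \emph{independent of} $I$. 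Your substitutes do not deliver this: restricting to prime $n$ makes coprimality trivial but collapses the density (and hence the measure of each block's arc union) by a factor $1/\log q_{k+1}$; the sieve heuristic ``coprime with probability $\prod_{p\mid n}(1-1/p)\ge c/\log\log n$'' also decays and, more importantly, says nothing about equidistribution of $\{n\alpha\}$ restricted to the good $n$. The genuine difficulty, which your sketch never engages, is the contribution of primes $p$ comparable to $q_n$ or lying between $q_n$ and $q_{n+1}$, where one must exploit the continued-fraction structure ($k=pk_1$ forces $\{k_1\alpha\}<1/p$, hence $k_1$ is essentially a multiple of $q_n$, etc.) and a dichotomy on the growth of $q_n$ (whether \eqref{A} holds infinitely often or \eqref{Gnogrowth} eventually holds) to extract the subsequence $n_k$. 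Without this counting theorem your first moments are not bounded below and your quasi-independence estimates have no starting point.

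The endgame is also gapped. A Paley--Zygmund argument inside each block plus near-independence across blocks yields only $|\limsup_k A_k|\ge c(\varepsilon)>0$, and the upgrade to full measure via ``a zero--one law'' is not available as stated: the event $\{\liminf n\|\gamma-n\alpha\|'\le\varepsilon\}$ is not obviously invariant under $\gamma\mapsto\gamma+\alpha$ (or any natural ergodic action), because the shift changes $n$ to $n-1$ and hence changes the coprimality constraint; your alternative ``run the argument inside every small subinterval'' is exactly what requires the $I$-uniform, $I$-independent-subsequence version of the counting theorem you have not proved. The paper avoids the issue entirely: with Theorem \ref{thm1} in hand for every subinterval, the arc unions $I_{n_k}$ (each of measure $2\tau\kappa+o(1)$) are pairwise \emph{asymptotically exactly} independent (Theorem \ref{thm2}), so Corollary \ref{cor} with constant $A=1$ gives measure at least $1/A=1$ directly for each fixed $\tau$, and one then lets $\tau\to0$. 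Also note that you do not need $c(\varepsilon)$ to stay bounded as $\varepsilon\to0$; what you need, and do not have, is full measure for each fixed $\varepsilon$.
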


\begin{remark}
\begin{itemize}
\item Since $||\gamma -n\alpha||^{\prime}$ is 1-periodic with respect
  to $\alpha$, we  always assume $\alpha\in (0,1)$ in this paper.
\item It is known \cite{kim2007} that for almost every $\gamma$,
\begin{equation}\label{Jan19}
 \liminf_{n\to \infty}n||\gamma
  -n\alpha||=0.
\end{equation}
  However,  the exceptional set of $\gamma$  of \eqref{Jan19} has full Hausdorff measure \cite{Yann}.
     A necessary and sufficient
  condition on
  $\psi$ and $\alpha$ so that $ \liminf_{n\to \infty}\psi(n)||\gamma
  -n\alpha||=0$ holds for a.e. $\gamma$  was given in \cite{kim2015}.
\end{itemize}

\end{remark}
%
Except  the generalized Borel-Cantelli lemma, basic facts on the distribution of prime numbers and some basic ergodic arguments,
the present paper is self-contained.

The rest of the paper is organized as follows:
In \S 2, we obtain the asymptotics of coprime pairs. In \S 3, we  will give the proof of Theorem \ref{mainthm}.
In \S 4, we will give the proof of  Theorem \ref{thmexp}.

The following standard notations will be used.
 Let $ (n,m)$ be the largest   common divisor of $n$ and $m$, and
 $\{x\}=x-\lfloor x\rfloor$, the fractional part of $x$.
Denote  by $|A|$  the Lebesgue measure  of $A$ and by $\# S$ the number of elements in $S$. Let $\T=\R/\Z$.  Without loss of generality, we always assume $\alpha\in(0,1)$ is irrational.
\section{The asymptotics of coprime pairs }

For $n\in \N$, let $\pi(n)$ be the number of prime number less than $n$.
It is well known that the prime numbers satisfy the following asymptotics \cite{MR0003018}
\begin{equation}\label{disprime3}
    \pi(n)=\frac{ n}{\ln n}(1+O(\frac{1}{\ln n})).
\end{equation}
By the distribution of prime numbers, we also have the following well known results:
a weaker version of Mertens' second theorem,
\begin{equation}\label{disprime1}
   \sum_{ 2\leq p< n\atop { p \text{ is prime}}}\frac{1}{p}=\ln \ln n+O(1),
\end{equation}
and a weaker version of Rosser's theorem (see \cite{MR1576808}),
\begin{equation}\label{disprime2}
   \sum_{  { p \text{ is prime}}}\frac{1}{p\ln p}<\infty.
\end{equation}

For any $\alpha\in [0,1)\backslash \Q$, we denote its continued fraction expansion by
\begin{equation*}
\alpha=[a_1,a_2,\cdots,a_n,\cdots]=\frac{1}{a_1+\frac{1}{a_2+\frac{1}{a_3+\frac{1}{\cdots}}}}.
\end{equation*}
Let
\begin{equation*}
\frac{p_n}{q_n}=[a_1,a_2,\cdots,a_n]=\frac{1}{a_1+\frac{1}{a_2+\frac{1}{a_3+\frac{1}{\cdots+\frac{1}{a_n}}}}},
\end{equation*}
where $(p_n,q_n)=1$.

By the properties of continued fraction expansion (see \cite{einsiedler2013ergodic} for example), one has
\begin{equation}\label{GDC1}
    \min_{p\in\Z}|k\alpha-p|\geq |q_n\alpha-p_n|
\end{equation}
for any $1\leq k<q_{n+1}$, and
\begin{equation}\label{GDC2}
      \frac{1}{q_n+q_{n+1}}\leq |q_n\alpha-p_n| \leq\frac{1}{q_{n+1}}.
\end{equation}
Moreover,
\begin{equation}\label{GDC3}
 q_n\alpha-p_n=(-1)^{n}   |q_n\alpha-p_n|.
\end{equation}
 In the following Sections 2 and 3,  $C$ is a large absolute constant.
Let
\begin{equation*}
    \kappa=\prod_{p \text{ is prime }}  (1-\frac{1}{p^2}).
\end{equation*}
It is well known that
\begin{equation*}
    \kappa=\frac{6}{\pi ^2}.
\end{equation*}
Set
\begin{equation}\label{GJ}
    J_n=\{k\in [1,q_{n+1}-1]\cap\Z: (\lfloor k\alpha\rfloor,k)=1\},
\end{equation}
where $\lfloor x\rfloor$ is the largest integer less or equal than $x$.

For an interval $I\subset [0,1)$, let
\begin{equation*}
    \hat{J}_n=\{k:k\in J_n, \{k\alpha\}\in I\}.
\end{equation*}
The following Theorem is crucial in our proof.

\begin{theorem}\label{thm1}


There exists a  sequence $\{n_k\}$ (independent  of $I$) such that
 the following asymptotics  hold as $k\to \infty$,
\begin{equation}\label{GasymJ}
    \#\hat{J}_{n_k}= (\kappa|I|+o(1))q_{n_k+1}.
\end{equation}

\end{theorem}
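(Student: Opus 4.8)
The plan is to count, for a fixed interval $I\subset[0,1)$, the integers $k\in[1,q_{n+1}-1]$ with $\{k\alpha\}\in I$ and $(\lfloor k\alpha\rfloor,k)=1$, and show that after passing to a suitable subsequence $n_k$ the coprimality condition behaves like an independent event of density $\kappa=6/\pi^2$. First I would note that on the block $1\le k<q_{n+1}$ the three-distance structure of $\{k\alpha\}$ is well understood: the points $\{k\alpha\}$ equidistribute in $[0,1)$ with a quantitative error governed by $q_{n+1}|q_n\alpha-p_n|\asymp 1$ via \eqref{GDC1}–\eqref{GDC2}, so $\#\{1\le k<q_{n+1}:\{k\alpha\}\in I\}=(|I|+o(1))q_{n+1}$ with an error that is in fact $O(1)$ on each Ostrowski-type block. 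The real issue is to insert the coprimality constraint $(\lfloor k\alpha\rfloor,k)=1$. I would write this using the Möbius function as $\sum_{d\mid \lfloor k\alpha\rfloor,\ d\mid k}\mu(d)=\mathbf 1[(\lfloor k\alpha\rfloor,k)=1]$, so that
\begin{equation*}
\#\hat J_n=\sum_{d\ge 1}\mu(d)\,\#\{1\le k<q_{n+1}:\ d\mid k,\ \{k\alpha\}\in I,\ d\mid \lfloor k\alpha\rfloor\}.
\end{equation*}
For a fixed $d$, writing $k=dk'$, the inner count is the number of $k'$ with $k'<q_{n+1}/d$, $\{dk'\alpha\}\in I$, and $\lfloor dk'\alpha\rfloor\equiv 0\pmod d$; the last two conditions together say that $dk'\alpha \bmod d$ lies in a prescribed union of $|I|$-length arcs, i.e. $k'\alpha \bmod 1$ lies in a set of measure $|I|/d$. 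So heuristically this inner count is $(|I|/d^2+o(1))q_{n+1}$, and summing $\mu(d)/d^2$ gives $\kappa|I|q_{n+1}$, which is exactly \eqref{GasymJ}.

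The main obstacle is making the error terms uniform enough to sum over $d$. The naive equidistribution error for the sequence $(dk'\alpha)$ depends badly on $d$ (the relevant continued-fraction denominators of $d\alpha$ need not relate nicely to those of $\alpha$), so one cannot simply truncate at $d\le D$ and bound the tail by $\sum_{d>D}1/d^2$ while controlling the head term by term with a single rate. This is precisely why the theorem only asserts the asymptotics along a subsequence $\{n_k\}$ and why that subsequence must be chosen \emph{independently of} $I$. My plan for this: split the $d$-sum at a slowly growing threshold $D=D(n)\to\infty$ (say $D=\log q_{n+1}$ or a similar choice dictated by \eqref{disprime1}–\eqref{disprime2}); for the tail $d>D$ bound the number of $k<q_{n+1}$ divisible by $d$ crudely by $q_{n+1}/d+1$ and use $\sum_{d>D}1/d=o(1)$ after recognizing that only $d$ with no large prime factor contribute meaningfully, invoking the Mertens-type bound \eqref{disprime1} and Rosser's bound \eqref{disprime2} to kill the genuinely dangerous range (squarefree $d$ with a large prime divisor contribute a convergent-type tail). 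For the head $d\le D$, each individual count $\#\{k'<q_{n+1}/d:\ k'\alpha\bmod 1\in E_d\}$ with $|E_d|=|I|/d$ has error $o(q_{n+1})$ as $n\to\infty$ for that fixed $d$; since there are only finitely many $d\le D(n)$ and $D(n)\to\infty$ slowly, a diagonal argument produces a single subsequence $n_k$ along which all these errors are $o(q_{n_k+1})$ simultaneously — and crucially the equidistribution rate is uniform in the \emph{position} of the arcs, hence independent of $I$, only the length $|I|$ entering.

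Concretely I would carry out the steps in this order: (1) establish the uniform three-distance equidistribution estimate for $(k\alpha)_{1\le k<q_{n+1}}$ on arbitrary arcs, with explicit dependence on $q_{n+1}|q_n\alpha-p_n|$; (2) reduce the coprimality indicator to a Möbius sum and interchange order of summation; (3) for each fixed $d$, translate the congruence-plus-fractional-part condition into a fractional-part condition for $(k'\alpha)$ on a union of arcs of total length $|I|/d$, and apply step (1); (4) choose the truncation $D(n)$ and bound the tail using \eqref{disprime1}, \eqref{disprime2} and the trivial divisor bound, noting that $\prod_{p}(1-p^{-2})$ emerges as $\sum_d\mu(d)/d^2$; (5) extract the subsequence $\{n_k\}$ by a diagonalization over $d\le D(n)$, and verify the whole construction used nothing about $I$ beyond $|I|$. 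The step I expect to fight hardest with is (4)–(5): getting the tail of the $\mu(d)/d^2$ sum to be genuinely $o(1)$ \emph{after} replacing the ideal count $|I|q_{n+1}/d^2$ by the actual count $q_{n+1}/d+O(1)$ requires the sieve-type input \eqref{disprime1}–\eqref{disprime2} rather than absolute convergence alone, and threading the subsequence through so that it works for \emph{all} $I$ at once is the delicate point that the statement is flagging by the parenthetical "(independent of $I$)".
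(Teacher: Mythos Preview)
Your broad strategy---M\"obius inversion plus equidistribution for the head, and a tail estimate for large $d$---matches the paper's inclusion--exclusion over primes, and your reduction of the condition $d\mid k,\ d\mid\lfloor k\alpha\rfloor,\ \{k\alpha\}\in I$ to $k=dk'$ with $\{k'\alpha\}\in I/d$ is exactly the paper's Claim~1. The head (small $d$) is indeed routine.

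The gap is in the tail, and it is a genuine one. Your stated plan---bound the contribution of each $d>D$ by $q_{n+1}/d+1$ and sum---fails outright, since $\sum_{d>D}|\mu(d)|/d$ diverges. You sense this and gesture toward \eqref{disprime1}--\eqref{disprime2}, but no concrete mechanism for saving the extra factor is given. The condition $d\mid\lfloor k\alpha\rfloor$ (equivalently $\{k'\alpha\}<1/d$) is what should supply the missing $1/d$, but making \emph{that} quantitative for large $d$ is the whole difficulty: once $d$ lies in the range $[q_n,q_{n+1}]$ the interval $[0,1/d)$ is shorter than the minimal gap $\|q_n\alpha\|$, and equidistribution gives nothing. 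The paper handles this by working prime-by-prime and splitting into four ranges ($p\asymp q_{n+1}$, $p\asymp q_n$, $Cq_n\le p\le q_{n+1}/C$, and $1/\epsilon<p\le q_n/C$), using the Ostrowski structure to force $k'$ with $\{k'\alpha\}<1/p$ to be multiples of $q_n$ in the middle range, and then invoking $\sum_p 1/(p\ln p)<\infty$ after a sieve-type reduction.

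Crucially, the subsequence $\{n_k\}$ in the paper does \emph{not} arise from a diagonalization over small $d$ as you propose. It comes from a dichotomy inside the tail argument: either $q_{n+1}\ge q_n(\ln\ln q_n)^2$ along a subsequence (and one takes that subsequence), or $q_{n+1}\le q_n(\ln\ln q_n)^2$ eventually (and the full sequence works, via a separate gap argument). This dichotomy, together with the four-range case analysis it feeds, is the substantive content missing from your plan; the diagonalization you describe handles only the head and does not by itself control the tail.
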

\begin{proof}

Set $J_n(p)$,
\begin{equation*}
    J_n(p)=\{k\in [1,q_{n+1}-1]\cap\Z: p|(\lfloor k\alpha\rfloor,k)\}.
\end{equation*}
and
\begin{equation*}
    \hat{J}_n(p)=\{k\in [1,q_{n+1}-1]\cap\Z: p|(\lfloor k\alpha\rfloor,k),\{k\alpha\}\in I \}.
\end{equation*}
 {\bf Claim 1:}  $k\in \hat{J}_n(p)$ if and only if  there exists some $k_1\in\N$ such that
\begin{equation}\label{equ1}
    k=pk_1
\end{equation}
and
\begin{equation}\label{equI}
    \{k_1\alpha\}\in \frac{I}{p},
\end{equation}
where
\begin{equation*}
    \frac{I}{p}=\{\frac{x}{p}:x\in I\}.
\end{equation*}
See a proof of the Claim 1  at the end of this Section.


Fix  $ \epsilon>0$ (small enough). Let $\varepsilon >0$ be sufficiently small.
 Now we distinguish the cases $p\leq \frac{1}{\epsilon}$ and $p>\frac{1}{\epsilon}$.

By ergodic theorem, for large $n$ (depending on $ \epsilon,\varepsilon$) one has
\begin{equation}\label{equ3}
 |I|\frac{q_{n+1}}{p^2}- \varepsilon q_{n+1}\leq \#\{k_1: 1\leq k_1<\frac{q_{n+1}}{p},\{k_1\alpha\}\in \frac{I}{p}\}\leq |I|\frac{q_{n+1}}{p^2}+\varepsilon q_{n+1}
\end{equation}
for all $p\leq  \frac{1}{\epsilon}$.
By Claim 1, one has
\begin{equation*}
|I|\frac{q_{n+1}}{p^2}-\varepsilon q_{n+1}\leq    \#\hat{J}_n(p)\leq |I|\frac{q_{n+1}}{p^2}+\varepsilon q_{n+1}.
\end{equation*}
By the definition of $J_n(p)$, we have for any prime numbers $p_1,p_2,\cdots, p_s$,
\begin{equation*}
    \hat{J}_n(p_1)\cap \hat{J}_n(p_2)\cap\cdots\cap \hat{J}_n(p_s)=\hat{J}_n(p_1p_2\cdots p_s).
\end{equation*}
Thus by the inclusion-exclusion principle, we have
\begin{equation*}
|I| (1-\varepsilon- \prod_{p\leq \frac{1}{\epsilon} \atop p \text{ is prime}} (1- \frac{1}{p^2}))q_{n+1}\leq \#\bigcup_{p\leq  \frac{1}{\epsilon}\atop p \text{ is prime}} \hat{J}_n(p)\leq |I|(1+\varepsilon- \prod_{p\leq \frac{1}{\epsilon}\atop p \text{ is prime}} (1- \frac{1}{p^2}))q_{n+1}.
\end{equation*}
This implies (letting $\varepsilon$ go to zero) that as $n\to \infty$,
\begin{equation*}
\#\{k:1\leq k<q_{n+1}, \text{ there exists some prime number }  2\leq p\leq \frac{1}{\epsilon} \text{ such that } p|(\lfloor k\alpha\rfloor,k),\text { and } k\alpha\in I\}
\end{equation*}
\begin{equation}\label{equ4}
   = |I|q_{n+1}[1 + o(1) -\prod_{p\leq \frac{1}{\epsilon}\atop p \text{ is prime}} (1- \frac{1}{p^2})].
\end{equation}


Now we are in a position to study the case $p>\frac{1}{\epsilon}$.
We will prove that there exists a sequence $\{n_k\}$ such that
\begin{equation}\label{Glargep}
    \#\bigcup_{p >\frac{1}{\epsilon}\atop{p \text{ is prime }}} J_{n_k}(p)=\varphi(\epsilon) q_{n_k+1}
\end{equation}
as $k\to\infty$, where $\varphi(\epsilon)$ goes to $0 $ as $\epsilon\to 0.$

We will split all  primes $p$ into the cases  $\frac{q_{n+1}}{C}<p<q_{n+1}$, $C q_n\leq p\leq \frac{q_{n+1}}{C}$, $ \frac{q_n}{C}\leq p\leq  Cq_n$ and
$ \frac{1}{\epsilon}< p\leq  \frac{q_n}{C}$, where $C$ is a large constant.

{\bf Case 1: }$\frac{q_{n+1}}{C}<p<q_{n+1}$

 By \eqref{equ1}, one has
 \begin{equation}\label{equk1}
    k_1\leq \frac{q_{n+1}}{p}.
 \end{equation}
 This  leads to $k_1\leq C$ in the current case. By \eqref{disprime3},   we have
 \begin{eqnarray*}
   \#\bigcup_{\frac{q_{n+1}}{C}<p<q_{n+1}\atop{p \text{ is prime }}} J_n(p)&\leq& C \#\{p: \frac{q_{n+1}}{C}<p<q_{n+1} \text{ and } p \text{ is prime}\} \\
     &\leq&C \frac{ q_{n+1}}{\ln q_{n+1}} =o(1)q_{n+1}.
 \end{eqnarray*}

 {\bf Case 2: } $ \frac{q_n}{C}\leq p\leq  Cq_n$

 By \eqref{disprime3} and \eqref{equk1} again, one has
 \begin{eqnarray*}
   \#\bigcup_{ \frac{q_n}{C}\leq p\leq  C q_n\atop{p \text{ is prime }}} J_n(p)&\leq& C\frac{q_{n+1}}{q_n} \#\{p:  \frac{q_n}{C}\leq p\leq  Cq_n \text{ and } p \text{ is prime}\} \\
     &\leq&C \frac{ q_{n+1}}{q_n}\frac{q_n}{\ln q_n} =o(1)q_{n+1}.
 \end{eqnarray*}

 {\bf Case 3:} $C q_n\leq p\leq \frac{q_{n+1}}{C}$.

 If $q_{n+1}\leq Cq_n$, there is no such $p$. We are done.
   Thus, we assume
 \begin{equation}\label{qnqn1}
    q_{n+1}\geq Cq_n.
 \end{equation}

In this case, one  has
 \begin{equation}\label{pl}
    k_1=\ell q_n
 \end{equation}
 for some $\ell\in \N$. Indeed, suppose  $k_1=\ell q_n +j_k$ with $1\leq j_k<q_{n}$.
 By \eqref{GDC1}, \eqref{GDC2} and $\ell\leq \frac{q_{n+1}}{Cq_n^2}$, one has
 \begin{eqnarray*}
   \{k_1\alpha\} &=& \{\ell q_n\alpha +j_k\alpha\} \\
    &\geq& ||j_k\alpha||- \ell|| q_n\alpha||\\
   &\geq& \frac{1}{2q_n} -\frac{q_{n+1}}{C q_n^2} \frac{1}{q_{n+1}}\\
    &\geq& \frac{1}{4q_n}.
 \end{eqnarray*}
 This  contradicts  \eqref{equI}.  

If $n$ is odd, by \eqref{GDC2} and \eqref{GDC3}, we have
\begin{eqnarray*}
  \{k_1\alpha\} &=& 1-||k_1\alpha||\\
   &=& 1-||\ell q_n\alpha||\\
   &\geq& 1-\frac{q_{n+1}}{p q_n}\frac{1}{q_{n+1}}\\
    &\geq& \frac{1}{2},
\end{eqnarray*}
which is impossible since $\{k_1\alpha\}<\frac{1}{p}$. This means there is no such $k$ in the current case.

Assume $n$ is even.
Suppose $ k\in J_n(p) $ and $ k\notin J_n(p^\prime) $  for prime $p^{\prime}<p$.

{\bf Claim 2:}
\begin{equation*}
    k=\ell p q_n, \text{ and } p^{\prime}\nmid \ell.
\end{equation*}
See a proof at the end of this Section.

 Thus for any prime $p$ in this case, we have
 \begin{eqnarray*}
    \# (J_n(p)\backslash \bigcup_{Cq_n<p^{\prime}<p\atop {p^\prime \text{ is prime}}} J_n(p^{\prime})) &\leq& \#\{\ell:\ell\leq\frac{q_{n+1}}{pq_n},  \ell \text{ does not have any divisor } p^{\prime} \text{ with }C q_n<p^{\prime}<p \} \\
    &\leq& \frac{q_{n+1}}{pq_n}\prod_ {C q_n<p^{\prime}<p\atop {p^\prime \text{ is prime}}}(1-\frac{1}{p^{\prime}}) \\
    &\leq&   C\frac{q_{n+1}}{pq_n}\frac{\ln q_n}{\ln p}\\
     &\leq&   C\frac{ q_{n+1}\ln q_n}{q_n}\frac{1}{p\ln p},
 \end{eqnarray*}
where the third inequality holds by \eqref{disprime1}.


 Thus by \eqref{disprime2}, we have
 \begin{eqnarray*}
    \# \bigcup _{Cq_n<p<\frac{q_{n+1}}{C}\atop{p \text{ is prime}}}J_n(p) &\leq&  C\frac{ q_{n+1}\ln q_n }{q_n}  \sum_{Cq_n<p<\frac{q_{n+1}}{C}\atop{p \text{ is prime}}}\frac{1}{p\ln p} \\
     &=& o(1)q_{n+1}.
 \end{eqnarray*}

 {\bf Case 4: }$ \frac{1}{\epsilon}\leq p\leq  \frac{q_n}{C}$

%

 For each $k_1<\frac{q_{n+1}}{p}$, rewrite $k_1=\ell q_n+\ell_{k_1}$ with $1\leq \ell_{k_1}<q_n$.
 Then by \eqref{GDC2}, one has
 \begin{equation*}
    ||\ell q_n \alpha||=\ell ||q_n\alpha||<\frac{1}{p}.
 \end{equation*}

 In this case, we must have
 \begin{equation}\label{Gcase42}
    ||\ell_{k_1}\alpha||\leq \frac{C}{p}.
 \end{equation}
 Indeed, if $||\ell_{k_1}\alpha||\geq \frac{C}{p}$,  by \eqref{GDC2}, one has
 \begin{eqnarray*}
   \{k_1\alpha\} &=& \{\ell q_n \alpha+\ell_{k_1}\alpha\} \\
    &\geq& ||\ell_{k_1}\alpha|| -||\ell q_n \alpha||\\
    &\geq& ||\ell_{k_1}\alpha|| -\frac{1}{p}\\
     &\geq& \frac{C}{p}.
 \end{eqnarray*}
 This is impossible since $\{k_1\alpha\}<\frac{1}{p}$.

 Also, by \eqref{GDC1} and \eqref{GDC2}, one has
 \begin{equation}\label{Gcase41}
    \#\{j:1\leq j<q_{n},||j\alpha||<\frac{C}{p}\}\leq C\frac{q_n}{p}+1\leq C\frac{q_n}{p}.
 \end{equation}
 By \eqref{Gcase42} and \eqref{Gcase41},
   we have
 \begin{eqnarray}
  \# \bigcup _{\frac{1}{\epsilon}<p<\frac{q_{n}}{C}\atop{p \text{ is prime}}}J_n(p) &\leq& \sum_{\frac{1}{\epsilon}<p<\frac{q_{n}}{C}\atop{p \text{ is prime}}} C(\frac{q_{n+1}}{pq_n} +1)\frac{q_n}{p} \nonumber\\
  &\leq& \sum_{\frac{1}{\epsilon}<p<\frac{q_{n}}{C}\atop{p \text{ is prime}}}  C(\frac{q_{n+1}}{pq_n})\frac{q_n}{p}+C\sum_{\frac{1}{\epsilon}<p<\frac{q_{n}}{C}\atop{p \text{ is prime}}}\frac{q_{n}}{p}\nonumber \\
     &=&\varphi(\epsilon)q_{n+1}+C q_{n}\ln \ln q_{n},\label{last1}
 \end{eqnarray}
where  $\varphi(\epsilon)\to 0$ as $\epsilon\to 0$.

 Suppose  there exists an infinite    sequence $\{n_k\}$ such that
\begin{equation}\label{A}
    q_{n_k+1}\geq q_{n_k}(\ln \ln q_{n_k})^2.
\end{equation}
Then by \eqref{last1}, 
we have
\begin{equation*}
     \# \bigcup _{\frac{1}{\epsilon}<p<\frac{q_{n_k}}{C}\atop{p \text{ is prime}}}J_{n_k}(p) =\varphi(\epsilon)q_{n_k+1}.
\end{equation*}
Putting   the other cases together, this completes the proof of \eqref{Glargep}.

Otherwise for all large $s$, we have
\begin{equation}\label{Gnogrowth}
    q_{s+1}\leq q_{s}(\ln\ln q_{s})^2.
\end{equation}

For any $p$, let $s $ be the unique positive integer such that
\begin{equation}\label{ps}
    q_{s}\leq p<q_{s+1}.
\end{equation}
Suppose $k_1,k_1^\prime\in J_n(p)$. We must have
\begin{equation}\label{gqs}
    |k_1-k_1^\prime|\geq q_{s-2}.
\end{equation}
Otherwise, by \eqref{GDC1} and \eqref{GDC2}, one has
\begin{equation*}
    ||k_1\alpha-k_1^\prime\alpha||>\frac{1}{2q_{s-2}}.
\end{equation*}
This is impossible since $\{k_1\alpha\}<\frac{1}{p}$,$\{k_1^\prime\alpha\}<\frac{1}{p}$ and $p\geq q_{s}$.

By \eqref{Gnogrowth}, \eqref{ps} and \eqref{gqs},
for any $k_1,k_1^\prime\in J_n(p)$,  one has
\begin{equation*}
    |k_1-k_1^\prime|\geq p^{\frac{1}{2}}.
\end{equation*}
Thus
\begin{eqnarray*}
  \# \bigcup _{\frac{1}{\epsilon}<p<\frac{q_{n}}{C}\atop{p \text{ is prime}}}J_n(p) &\leq& \sum_{p>\frac{1}{\epsilon}\atop{p \text{ is prime}}} C\frac{q_{n+1}}{p p^{\frac{1}{2}}} \\
     &=& \varphi(\epsilon)q_{n+1}.
 \end{eqnarray*}
Putting   the other cases together, we finish the proof of \eqref{Glargep}.

Now the Theorem follows from \eqref{equ4} and \eqref{Glargep}  by letting $\epsilon \to 0$.

%
%
%

\end{proof}
{\bf Proof of   Claim 1}
\begin{proof}
Suppose $k\in \hat{J}_n(p)$. Then  there exist $k_1,q_1\in\N$ such that
\begin{equation*}
    k=pk_1
\end{equation*}
and
\begin{equation*}
    \lfloor k\alpha\rfloor=pq_1.
\end{equation*}

Using $ k_1\alpha=\lfloor k_1\alpha\rfloor+\{k_1\alpha\}$, one has
\begin{equation*}
     \lfloor k\alpha\rfloor= \lfloor pk_1\alpha\rfloor=p\lfloor k_1\alpha\rfloor+\lfloor p\{k_1\alpha\}\rfloor.
\end{equation*}
This implies
\begin{equation*}
   p |\lfloor p\{k_1\alpha\}\rfloor.
\end{equation*}
Noting  that $0\leq \lfloor p\{k_1\alpha\}\rfloor\leq p-1$, one has
\begin{equation}\label{equ2}
    \lfloor p\{k_1\alpha\}\rfloor=0.
\end{equation}
Thus
\begin{equation*}
    \{k_1\alpha\}<\frac{1}{p}.
\end{equation*}
Combining with the assumption that $\{k\alpha\}\in I$, one has
\begin{equation*}
   p\{k_1\alpha\}=\{k\alpha\}\in I.
\end{equation*}
This yields that
\begin{equation*}
    \{k_1\alpha\}\in \frac{I}{p}.
\end{equation*}
The proof of the other side   is similar. We omit the details.
\end{proof}

{\bf Proof of   Claim 2}
\begin{proof}
Otherwise, $k=p^{\prime}p m q_n$ for some $p^{\prime}<p$.
Since $n$ is even, by \eqref{GDC3},
one has
\begin{equation*}
    ||q_n\alpha||=\{q_n\alpha\}<\frac{1}{q_{n+1}}.
\end{equation*}
Thus
\begin{eqnarray*}
  \{ p m q_n\alpha\} &=& pm||q_n\alpha|| \\
  &<& \frac{q_{n+1}}{p^{\prime}q_n} \frac{1}{q_{n+1}}=\frac{1}{p^{\prime}q_n}.
\end{eqnarray*}
This implies
\begin{eqnarray*}
  \lfloor k\alpha\rfloor &=& \lfloor p^\prime p m q_n \alpha\rfloor \\
    &=& p^\prime \lfloor p m q_n \alpha\rfloor +\lfloor p^{\prime}\{p m q_n \alpha\}\rfloor\\
     &=& p^\prime \lfloor p m q_n \alpha\rfloor.
\end{eqnarray*}
 Thus
 \begin{equation*}
    p^{\prime}|(\lfloor k\alpha\rfloor,k).
 \end{equation*}
 We get a contradiction since $(\lfloor k\alpha\rfloor,k)\notin J(p^\prime)$.

\end{proof}
%
%
%
\begin{remark}\label{nodepend}
We should mention that the   sequence  $\{n_k\}$ in Theorem \ref{thm1} is either defined by \eqref{A} or is the entire sequence $n\in\N$ in case \eqref{Gnogrowth}. So it  does not depend on the interval $I$.
\end{remark}
\section{Proof of Theorem \ref{mainthm}}
We present the general form   of the Borel-Cantelli Lemma first, which is the key technique  in  this part of the argument. See \cite{dodson2007diophantine,sprindzhuk1979metric}  for details.
\begin{lemma}
\label{thmleb}
Let $E_k$, $k=1,2,\cdots$, be a sequence of Lebesgue measurable sets in
$ [0,1] $
and suppose that
 \begin{equation}\label{eqleb}
\sum_{k=1}^\infty |E_k| =\infty.
 \end{equation}
 Then the Lebesgue measure of $E:=\limsup_{N\to\infty}
 E_N:=\bigcap_{N=1}^\infty \bigcup_{k=N}^\infty E_k$ satisfies
\begin{equation*}
 |E|  \ge  \limsup_{N\to\infty}\frac{(\sum_{k=1}^N |E_k|)^2}
 {\sum_{k=1}^N\sum_{l=1}^N |E_k\cap E_l|}.
\end{equation*}

\end{lemma}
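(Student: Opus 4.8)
The plan is to derive the inequality from the Cauchy--Schwarz inequality applied to the counting functions $f_{m,N}=\sum_{k=m}^N\mathbf{1}_{E_k}$, together with a truncation argument that uses the divergence hypothesis to discard any finite initial block.

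First I would fix $1\le m\le N$ and set $A=\sum_{k=m}^N|E_k|=\int_0^1 f_{m,N}$ and $B=\sum_{k=m}^N\sum_{l=m}^N|E_k\cap E_l|=\int_0^1 f_{m,N}^2$. With $U=\bigcup_{k=m}^N E_k=\{f_{m,N}>0\}$ one has $A=\int_U f_{m,N}\le B^{1/2}|U|^{1/2}$ by Cauchy--Schwarz, so
\begin{equation*}
  \Big|\bigcup_{k=m}^N E_k\Big|\ \ge\ \frac{\big(\sum_{k=m}^N|E_k|\big)^2}{\sum_{k=m}^N\sum_{l=m}^N|E_k\cap E_l|}.
\end{equation*}
Letting $N\to\infty$, the left side increases to $\big|\bigcup_{k\ge m}E_k\big|$, so for every $m$,
\begin{equation*}
  \Big|\bigcup_{k\ge m}E_k\Big|\ \ge\ \limsup_{N\to\infty}\frac{\big(\sum_{k=m}^N|E_k|\big)^2}{\sum_{k=m}^N\sum_{l=m}^N|E_k\cap E_l|}.
\end{equation*}

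Next I would show that discarding the block $k<m$ does not decrease this $\limsup$: it equals $c:=\limsup_{N\to\infty}(\sum_{k=1}^N|E_k|)^2/(\sum_{k,l=1}^N|E_k\cap E_l|)$. Writing $A_N=\sum_{k=1}^N|E_k|$ and $B_N=\sum_{k,l=1}^N|E_k\cap E_l|$, one has $\sum_{k=m}^N|E_k|=A_N-A_{m-1}$ and $\sum_{k,l=m}^N|E_k\cap E_l|\le B_N$, hence
\begin{equation*}
  \frac{\big(\sum_{k=m}^N|E_k|\big)^2}{\sum_{k,l=m}^N|E_k\cap E_l|}\ \ge\ \frac{(A_N-A_{m-1})^2}{B_N}\ =\ \frac{A_N^2}{B_N}\Big(1-\frac{A_{m-1}}{A_N}\Big)^2 .
\end{equation*}
Since $\sum_k|E_k|=\infty$ forces $A_N\to\infty$, the last factor tends to $1$, and as $0\le A_N^2/B_N\le 1$ the $\limsup_N$ of the right side is exactly $c$. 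Therefore $\big|\bigcup_{k\ge m}E_k\big|\ge c$ for every $m$. Finally, $E=\bigcap_{m\ge1}\bigcup_{k\ge m}E_k$ is a decreasing intersection of subsets of $[0,1]$, so continuity of Lebesgue measure from above gives $|E|=\lim_{m\to\infty}\big|\bigcup_{k\ge m}E_k\big|\ge c$, which is the assertion.

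I expect the only subtle point to be the truncation step: the conclusion is phrased with partial sums starting from $1$, whereas $E=\limsup_k E_k$ ignores any finite initial segment, so one must verify that replacing $\{E_k\}_{k\ge1}$ by $\{E_k\}_{k\ge m}$ cannot lower the relevant $\limsup$ — and this is precisely where $\sum_k|E_k|=\infty$ is used and is indispensable (the statement fails without it, e.g. for a strictly decreasing sequence of sets). The Cauchy--Schwarz estimate and the passage to the limit via continuity of measure are routine.
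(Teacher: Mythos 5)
Your proof is correct. Note that the paper does not actually prove Lemma \ref{thmleb}: it states it as the known generalized (Kochen--Stone type) Borel--Cantelli lemma and refers to \cite{dodson2007diophantine,sprindzhuk1979metric} for details. Your argument --- Cauchy--Schwarz applied to $f_{m,N}=\sum_{k=m}^N\mathbf{1}_{E_k}$ on the set where it is positive, letting $N\to\infty$, then using $\sum_k|E_k|=\infty$ to show that discarding the initial block $k<m$ does not lower the $\limsup$ (here the bound $A_N^2/B_N\le 1$, valid since the sets lie in $[0,1]$, justifies passing to the same $\limsup$), and finally continuity of measure from above --- is exactly the classical proof found in those references, and all steps, including the truncation step you flag as the subtle one, are sound; the only cosmetic caveat is that for small $N$ the ratio may be $0/0$, which is irrelevant to the $\limsup$.
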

  Lemma \ref{thmleb} immediately implies
\begin{corollary}\label{cor}
Suppose the  sets  $\{E_k\}$ are pairwise quasi-independent  with respect to constant $A>0$, that is
\begin{equation*}
    |E_k \cap E_l |\leq A|E_k||E_l|+C2^{-(k+l)},
\end{equation*}
for all $k\neq l$, and $\sum_{k=1}^\infty |E_k| =\infty$. Let $E:=\limsup_{N\to\infty}
 E_N:=\bigcap_{N=1}^\infty \bigcup_{k=N}^\infty E_k$.
 Then \begin{equation*}
 |E|  \ge \frac{1}{A}.
\end{equation*}
\end{corollary}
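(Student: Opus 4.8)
The plan is to feed the quasi-independence hypothesis directly into Lemma \ref{thmleb}; the entire content of the argument is a bound on the denominator $\sum_{k=1}^N\sum_{l=1}^N|E_k\cap E_l|$ appearing there. I would write $S_N:=\sum_{k=1}^N|E_k|$, so that $S_N\to\infty$ by assumption, and split the double sum into the diagonal terms ($k=l$) and the off-diagonal terms ($k\neq l$).

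For the diagonal one has $\sum_{k=1}^N|E_k\cap E_k|=\sum_{k=1}^N|E_k|=S_N$. For the off-diagonal terms, the quasi-independence assumption applied term by term gives
$$\sum_{k\neq l}|E_k\cap E_l|\ \le\ A\sum_{k\neq l}|E_k||E_l|+C\sum_{k,l\ge 1}2^{-(k+l)}\ \le\ A\,S_N^2+C,$$
where I used $\sum_{k\neq l}|E_k||E_l|\le\big(\sum_{k=1}^N|E_k|\big)^2=S_N^2$ and $\sum_{k\ge 1}2^{-k}=1$. Hence $\sum_{k,l=1}^N|E_k\cap E_l|\le A\,S_N^2+S_N+C$, and Lemma \ref{thmleb} yields
$$|E|\ \ge\ \limsup_{N\to\infty}\frac{S_N^2}{A\,S_N^2+S_N+C}\ =\ \limsup_{N\to\infty}\frac{1}{A+S_N^{-1}+C\,S_N^{-2}}\ =\ \frac1A,$$
since $S_N\to\infty$.

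The only point that has to be checked is that the lower-order contributions — the term $S_N$ coming from the diagonal and the constant $C$ coming from the geometric tails $C2^{-(k+l)}$ — are indeed $o(S_N^2)$, and this is precisely where the divergence hypothesis $\sum_k|E_k|=\infty$ enters. I do not expect any genuine obstacle beyond this bookkeeping.
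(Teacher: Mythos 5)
Your proof is correct and is exactly the argument the paper has in mind: the paper states that Lemma \ref{thmleb} ``immediately implies'' the corollary, and your diagonal/off-diagonal bookkeeping, with the geometric tail summing to a constant and $S_N\to\infty$ killing the lower-order terms, is precisely that routine verification.
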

Define
\begin{equation}\label{GI}
    I_n=\bigcup_{k\in J_n}(\{k\alpha\}-\frac{\tau}{q_{n+1}},\{k\alpha\}+\frac{\tau}{q_{n+1}}),
\end{equation}
where   $0<\tau<\frac{1}{3}$, and $J_n$ is given by \eqref{GJ}.

By \eqref{GDC1} and \eqref{GDC2}, we have
 $I_n\subset [0,1]$ and
 $I_n$ contains exactly $\# J_n$ intervals.

Letting $I=[0,1)$ in Theorem \ref{thm1}, we have
\begin{lemma}\label{lefinity}
Let  sequence $\{n_k\}$ be given by Theorem \ref{thm1}. Then we have
\begin{equation*}
    \# {J}_{n_k}= (\kappa +o(1))q_{n_k+1}
\end{equation*}
and
\begin{equation*}
    |I_{n_k}|= 2\tau\kappa+o(1).
\end{equation*}

\end{lemma}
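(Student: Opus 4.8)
The plan is to deduce both assertions from Theorem~\ref{thm1} applied with the full interval $I=[0,1)$, together with the fact that $I_n$ is a \emph{disjoint} union of its constituent intervals.

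For the first assertion I would apply Theorem~\ref{thm1} with $I=[0,1)$: then $\hat J_n=J_n$ for every $n$, since $\{k\alpha\}\in[0,1)$ automatically. By Remark~\ref{nodepend} the sequence $\{n_k\}$ produced by Theorem~\ref{thm1} is defined with no reference to $I$ — it is the sequence of \eqref{A}, or else all of $\N$ when \eqref{Gnogrowth} holds — so \eqref{GasymJ} with $|I|=1$ reads exactly $\#J_{n_k}=(\kappa+o(1))q_{n_k+1}$, which is the first claim.

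For the second assertion, recall from \eqref{GI} that $I_n$ is a union of $\#J_n$ open intervals of length $\tfrac{2\tau}{q_{n+1}}$, centered at the points $\{k\alpha\}$, $k\in J_n$. For distinct $k,k'\in J_n\subset\{1,\dots,q_{n+1}-1\}$ we have $1\le|k-k'|<q_{n+1}$, so \eqref{GDC1} and \eqref{GDC2} give
\begin{equation*}
\|(k-k')\alpha\|\ \ge\ |q_n\alpha-p_n|\ \ge\ \frac{1}{q_n+q_{n+1}},
\end{equation*}
which exceeds $\tfrac{2\tau}{q_{n+1}}$ whenever $\tfrac{q_{n+1}}{q_n}\ge\tfrac{2\tau}{1-2\tau}$; since $\tau<\tfrac13$ this certainly holds when $q_{n+1}\ge 2q_n$, and in particular along the sequence $\{n_k\}$ of \eqref{A}, for which $q_{n_k+1}/q_{n_k}\to\infty$. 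In that situation the $\#J_n$ intervals are pairwise disjoint, $|I_n|=\#J_n\cdot\tfrac{2\tau}{q_{n+1}}$, and combining with the first assertion,
\begin{equation*}
|I_{n_k}|=\#J_{n_k}\cdot\frac{2\tau}{q_{n_k+1}}=(\kappa+o(1))\cdot 2\tau=2\tau\kappa+o(1).
\end{equation*}

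The one substantive point is this disjointness of the intervals; everything else is bookkeeping built on Theorem~\ref{thm1} and Remark~\ref{nodepend}, and I expect the disjointness check to be the only real obstacle. The case to watch is $q_{n+1}<2q_n$ (which forces $a_{n+1}=1$) in the regime \eqref{Gnogrowth}, where $\{n_k\}$ is all of $\N$: there an overlap of two of the intervals forces their centers to be $\{k\alpha\}$ and $\{(k+q_n)\alpha\}$ with $1\le k<q_{n+1}-q_n$, so the number of overlapping pairs, and hence the overlapped measure, can be estimated directly from $\|(k-k')\alpha\|\ge\tfrac{1}{q_n+q_{n+1}}$ and from $q_{n+1}-q_n=q_{n-1}$. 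It is probably cleanest to observe that $\tau\le\tfrac14$ already forces disjointness unconditionally (then $\tfrac{q_{n+1}}{q_n+q_{n+1}}>\tfrac12\ge 2\tau$), which is all the proof of Theorem~\ref{mainthm} needs, since that proof will take $\tau\to 0$.
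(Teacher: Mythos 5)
Your proof follows the same route as the paper's: the paper also obtains the first identity by putting $I=[0,1)$ into Theorem \ref{thm1} (with Remark \ref{nodepend} supplying the $I$-independence of $\{n_k\}$), and it obtains the second from the statement, made just before the lemma, that $I_n\subset[0,1]$ ``contains exactly $\#J_n$ intervals'' of length $\tfrac{2\tau}{q_{n+1}}$ --- i.e.\ it silently uses exactly the disjointness you single out as the substantive point. Your scrutiny of that point is justified: the guaranteed separation of the centers is only $\|q_n\alpha\|\ge\tfrac{1}{q_n+q_{n+1}}$, so for $\tau\in(1/4,1/3)$ and indices with $a_{n+1}=1$ (which do occur in $\{n_k\}$ in the regime \eqref{Gnogrowth}) adjacent intervals can overlap, and your clean fix --- take $\tau\le 1/4$, which is harmless because the proof of Theorem \ref{mainthm} only uses arbitrarily small $\tau$ --- is the right one (it is in effect the correction the paper's one-line justification needs). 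I would caution you, however, against the fallback of ``estimating the overlapped measure directly'': the loss per overlapping pair is $\tfrac{2\tau}{q_{n+1}}-\|q_n\alpha\|$, the candidate pairs $(k,k+q_n)$ number about $q_{n+1}-q_n=q_{n-1}$, and when the partial quotients are bounded (so that \eqref{Gnogrowth} holds and $\{n_k\}$ is all of $\N$) one has $q_{n-1}$ comparable to $q_{n+1}$; the resulting bound on the total overlap is of order $\tfrac{q_{n-1}}{q_{n+1}}\bigl(2\tau-\tfrac{q_{n+1}}{q_n+q_{n+1}}\bigr)$, which is not $o(1)$ for $\tau$ close to $1/3$ and suitable $\alpha$, so that route does not recover the stated asymptotics and should not be relied on.
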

Now we want to show the quasi-independence of a subsequence  of  $\{I_{n_k}\}$.
\begin{theorem}\label{thm2}
Fixing  $n_{{l_1}} \in  \{{n_k}\}$, we have
\begin{equation}\label{GasymInm}
    |I_{n_{l_1}}\cap I_{n_{l_2}}|= |I_{n_{l_1}}|\;|I_{n_{l_2}}|+o(1),
\end{equation}
as   $n_{l_2}\in  \{{n_k}\}$ goes to infinity.

\end{theorem}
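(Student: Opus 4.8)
The key observation is that $I_{n_{l_1}}$ is a fixed finite union of roughly $\kappa q_{n_{l_1}+1}$ intervals, each of length $2\tau/q_{n_{l_1}+1}$, and that computing $|I_{n_{l_1}}\cap I_{n_{l_2}}|$ amounts to counting how many of the centers $\{k\alpha\}$, $k\in J_{n_{l_2}}$, land inside $I_{n_{l_1}}$, together with a negligible boundary correction. So the plan is: first write $I_{n_{l_1}}$ as a union of its constituent intervals $I_{n_{l_1}}=\bigcup_{j} B_j$ where the $B_j$ are pairwise disjoint intervals of length $2\tau/q_{n_{l_1}+1}$; it suffices to treat one such $B_j$ of the form $(\beta-\tfrac{\tau}{q_{n_{l_1}+1}},\beta+\tfrac{\tau}{q_{n_{l_1}+1}})$ and show $|B_j\cap I_{n_{l_2}}|=|B_j|\,|I_{n_{l_2}}|+o(1/q_{n_{l_1}+1})$ uniformly in $j$, then sum over the $\kappa q_{n_{l_1}+1}+o(q_{n_{l_1}+1})$ intervals $B_j$. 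By 1-periodicity and invariance of everything under the structure, it actually suffices to handle the interval $I=B_j$ as an \emph{arbitrary} subinterval of $[0,1)$ of length $2\tau/q_{n_{l_1}+1}$.

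Second, I would invoke Theorem \ref{thm1} with this interval $I$. Crucially, by Remark \ref{nodepend} the sequence $\{n_k\}$ is the \emph{same} for every interval $I$, so for $n_{l_2}$ running through $\{n_k\}$ we get
\begin{equation*}
    \#\hat J_{n_{l_2}} = (\kappa|I| + o(1))q_{n_{l_2}+1},
\end{equation*}
where $\hat J_{n_{l_2}}=\{k\in J_{n_{l_2}}:\{k\alpha\}\in I\}$, and the $o(1)$ is uniform — this is the point where one must be slightly careful, since $|I|=2\tau/q_{n_{l_1}+1}$ is small but fixed (as $n_{l_1}$ is fixed while $n_{l_2}\to\infty$), so the asymptotics in Theorem \ref{thm1} apply with this fixed $I$.

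Third, I would translate this count into measure. Each $k\in\hat J_{n_{l_2}}$ contributes an interval $(\{k\alpha\}-\tfrac{\tau}{q_{n_{l_2}+1}},\{k\alpha\}+\tfrac{\tau}{q_{n_{l_2}+1}})$ of length $2\tau/q_{n_{l_2}+1}$, all of these intervals are pairwise disjoint by \eqref{GDC1}–\eqref{GDC2} (the centers are $\tfrac{1}{2q_{n_{l_2}}}$-separated, and $2\tau/q_{n_{l_2}+1}$ is much smaller), and they are contained in a $\tau/q_{n_{l_2}+1}$-neighborhood of $I$. Hence $|I\cap I_{n_{l_2}}|$ equals $\#\hat J_{n_{l_2}}\cdot\tfrac{2\tau}{q_{n_{l_2}+1}}$ up to an error from the at most two "boundary" intervals near the endpoints of $I$, which is $O(1/q_{n_{l_2}+1})=o(1)$. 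Thus $|I\cap I_{n_{l_2}}| = 2\tau\kappa|I| + o(1) = |I|\,(2\tau\kappa) + o(1)$, and since $|I_{n_{l_2}}| = 2\tau\kappa + o(1)$ by Lemma \ref{lefinity}, we get $|I\cap I_{n_{l_2}}| = |I|\,|I_{n_{l_2}}| + o(1)$ for a single constituent interval. Summing over all $\sim\kappa q_{n_{l_1}+1}$ constituent intervals $B_j$ of $I_{n_{l_1}}$, whose total length is $|I_{n_{l_1}}|$, gives $|I_{n_{l_1}}\cap I_{n_{l_2}}| = |I_{n_{l_1}}|\,|I_{n_{l_2}}| + o(1)$, as desired.

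The main obstacle is bookkeeping the error terms: the $o(1)$ in Theorem \ref{thm1} must be summed over $\sim\kappa q_{n_{l_1}+1}$ subintervals $B_j$, so I need the $o(1)$ in \eqref{GasymJ} to be such that $q_{n_{l_1}+1}$ copies of (the $B_j$-version of) it still yield $o(q_{n_{l_2}+1})$ as $n_{l_2}\to\infty$ with $n_{l_1}$ fixed; since $n_{l_1}$ is fixed this is automatic, but it should be stated carefully. Equivalently — and perhaps cleaner — one applies Theorem \ref{thm1} directly to the \emph{union} set by noting that $\#\{k\in J_{n_{l_2}}:\{k\alpha\}\in I_{n_{l_1}}\}$ can be obtained by additivity over the finitely many (for fixed $n_{l_1}$) constituent intervals of $I_{n_{l_1}}$, each handled by Theorem \ref{thm1}, so that $\#\{k\in J_{n_{l_2}}:\{k\alpha\}\in I_{n_{l_1}}\} = (\kappa|I_{n_{l_1}}| + o(1))q_{n_{l_2}+1}$, and then the disjointness/boundary argument of the third step finishes it. The secondary point to check is that the intervals making up $I_{n_{l_2}}$ really are disjoint and that double-counting across the boundary of $I_{n_{l_1}}$ contributes only $O(\#J_{n_{l_1}}/q_{n_{l_2}+1}) = o(1)$, which again uses that $n_{l_1}$ is fixed.
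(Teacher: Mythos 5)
Your proposal follows essentially the same route as the paper's proof: fix one constituent interval of $I_{n_{l_1}}$, apply Theorem \ref{thm1} to it (using, as in Remark \ref{nodepend}, that the sequence $\{n_k\}$ does not depend on the interval), convert the count into measure via \eqref{GI} and Lemma \ref{lefinity}, and sum over the finitely many constituent intervals, which is harmless because $n_{l_1}$ is fixed. The only slip is your separation constant for the centers of $I_{n_{l_2}}$ (the guaranteed gap from \eqref{GDC1}--\eqref{GDC2} is $\tfrac{1}{q_{n_{l_2}}+q_{n_{l_2}+1}}\geq\tfrac{1}{2q_{n_{l_2}+1}}$, not $\tfrac{1}{2q_{n_{l_2}}}$), a minor point since the resulting (near-)disjointness of the constituent intervals is used implicitly in the paper as well.
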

\begin{proof}
Recall  that $I_n$ contains $\# J_n$ intervals. Let $I_n=\bigcup I_n^i$, $i=1,2,\cdots,\#J_n$.
Fix one interval $I_{n_{l_1}}^i $ with $1\leq i\leq \#J_{n_{l_1}}$.
Now let $l_2 $ go to infinity.
By Theorem \ref{thm1}, one has
\begin{equation*}
   \# \{k:1\leq k<q_{n_{l_2}+1}:  \{k\alpha\}\in  I_{n_{l_1}}^i  ,(\lfloor k\alpha\rfloor,k)=1 \}=q_{n_{l_2}+1}\kappa|I_{n_{l_1}}^i|+o(1)q_{n_{l_2}+1}.
   \end{equation*}
   By \eqref{GI}, one has
   \begin{equation*}
    |I_{n_{l_2}}\cap I_{n_{l_1}}^i|=2\tau \kappa |I_{n_{l_1}}^i|+o(1).
   \end{equation*}

   By Lemma \ref{lefinity}, we have
   \begin{equation*}
    |I_{n_{l_2}}\cap I_{n_{l_1}}^i| =|I_{n_{l_2}}|\; |I_{n_{l_1}}^i|+o(1).
   \end{equation*}
   Summing up all the $i\in \#J_{n_{l_1}}$, we  obtain the Theorem.
\end{proof}
{\bf Proof of Theorem \ref{mainthm}}
\begin{proof}
We give the proof of $\gamma\in[0,1)$ first.
Applying Theorem \ref{thm2}, there exists
a sequence $\{n_{k_l}\}$  such that
\begin{equation*}
    |I_{n_{k_i}}\cap I_{n_{k_j}}|=|I_{n_{k_i}}|\;|I_{n_{k_j}}|+\frac{O(1)}{2^{i+j}}.
\end{equation*}
Letting $E_l=I_{n_{k_l}}$, by Lemma \ref{lefinity}, one has
\begin{equation*}
    |E_l|= 2\tau\kappa+o(1).
\end{equation*}
Applying Corollary \ref{cor} with $A=1$, we get $|\limsup E_l|=1$.

By the definition of $I_n$, we have
for any $\gamma\in I_n$,   there exists some $1\leq k<q_{n+1}$ such that
$(\lfloor k\alpha\rfloor,  k)=1$  and
\begin{equation*}
    |\gamma-\{k\alpha\}|= |\gamma- k\alpha+\lfloor k\alpha\rfloor|\leq   \frac{\tau}{q_{n+1}}.
\end{equation*}
This implies   for any $n_{k_l}$, there exists some $1\leq j<q_{n_{k_l}+1}$ such that
\begin{equation*}
   ||\gamma- j\alpha||^{\prime}\leq  \frac{\tau}{q_{n_k+1}}\leq \frac{\tau}{j}.
\end{equation*}
 Since  $\tau$ is  arbitrary,  we have
for almost every $\gamma\in[0,1)$,
\begin{equation*}
  \liminf_{k\to \infty} k ||\gamma- k\alpha||^{\prime}=0.
\end{equation*}
Let  us now consider $\gamma\in[m,m+1)$ for some $m\in \Z$.
In this case, we only need to modify the definition of $I_n$ in \eqref{GI} as
\begin{equation*}
    I_n^m=\bigcup_{k\in J_n}(m+\{k\alpha\}-\frac{\tau}{q_{n+1}},m+\{k\alpha\}+\frac{\tau}{q_{n+1}}).
\end{equation*}
By the same proof as for $\gamma\in[0,1)$, we have for almost every $\gamma\in[m,m+1)$,
\begin{equation*}
  \liminf_{k\to \infty} k ||\gamma- k\alpha||^{\prime}=0.
\end{equation*}
This completes the proof.
\end{proof}
%
%
\section{Proof of Theorem \ref{thmexp}}
In this section, we will prove the following theorem, which is  a finer  version of Theorem \ref{thmexp}.
\begin{theorem}\label{thmexp1}
For any positive constant $M$, there exist $\alpha\in \R\backslash\Q$ and an uncountable set $\Omega\subset \R$(depending on $\alpha$) such that
for all $\gamma\in \Omega$,
the inequality
 \begin{equation}\label{Mar182019}
    |\gamma-q\alpha+p|\leq\frac{M }{ {q}}.
 \end{equation}
only has finitely many coprime solutions  $(p,q)\in \N^2$.
\end{theorem}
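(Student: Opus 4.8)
The plan is to build $\alpha$ and the elements of $\Omega$ simultaneously by a rapidly converging continued fraction / nested interval construction, forcing a structural obstruction to coprimality at every scale. The key observation is the one already exploited in \S2: if $q = p' q'$ and $\lfloor q\alpha\rfloor$ is also divisible by $p'$, then $(p,q)$ with $p=-\lfloor q\alpha\rfloor+(\text{something})$ cannot be coprime. More precisely, if $\gamma$ is extremely well approximated by $j\alpha$ with $j$ confined to an arithmetic progression $\{r q_{n}\}$ (so that every good approximating denominator $q$ is forced to be a multiple of a fixed prime $p_n$), and if simultaneously $\lfloor \gamma \rfloor$-type considerations force $p$ to share that prime factor, then no coprime solution of \eqref{Mar182019} with $q$ in the relevant range can exist.

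Concretely, I would proceed as follows. First, fix an increasing sequence of primes $p_1 < p_2 < \cdots$ and construct $\alpha = [a_1, a_2, \dots]$ with the partial quotients $a_n$ growing so fast that $q_{n+1} \geq q_n^{10}$, say, and moreover arrange (by choosing which step to "freeze" at) that infinitely many $q_n$ are divisible by $p_n$ — this can be done because one is free to multiply through the continued fraction algorithm by auxiliary integer factors, or more cleanly, by working with denominators of the form $q_n = p_1 p_2 \cdots p_n \tilde q_n$. Second, for each such $n$, the estimates \eqref{GDC1}, \eqref{GDC2} show that any $j$ with $j < q_{n+1}$ and $\|j\alpha\|$ smaller than $\tfrac{1}{2q_n}$ must be a multiple of $q_n$, hence of $p_n$. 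Third, I would then carve out a nested sequence of intervals $\Omega_n \supset \Omega_{n+1}$, where $\Omega_n$ consists of $\gamma$ such that for every coprime $(p,q)$ with $q$ in the window $[q_n, q_{n+1})$ one has $|\gamma - q\alpha + p| > M/q$; the point is that the only way to have $|\gamma - q\alpha + p| \leq M/q \leq M/q_n$ is for $\|q\alpha - \gamma\|$ to be small, which (for $\gamma$ chosen near $\{j_n\alpha\}$ for a suitable $j_n = r_n q_n$) forces $q$ to be a multiple of $p_n$, and then a parallel divisibility argument on $p$ — exactly the Claim 1 / Claim 2 mechanism of \S2 applied with $\gamma$ in place of $0$ — shows $p_n \mid p$ as well, killing coprimality. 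Taking $\Omega = \bigcap_n \Omega_n$ (which is a Cantor-type set, hence uncountable, provided at each stage we retain a full subinterval — this is where the $\tau$-neighbourhoods and the flexibility in $r_n$ matter) finishes the proof.

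The main obstacle, and the step that needs the most care, is the last one: showing that the divisibility obstruction genuinely rules out \emph{all} coprime solutions in the window, not just the "aligned" ones. A coprime pair $(p,q)$ could in principle have $q$ \emph{not} a multiple of $p_n$ yet still satisfy $|\gamma - q\alpha + p| \leq M/q$ if $M$ is not small relative to $q_n$; so one must choose the window $[q_n, q_{n+1})$ and the gap ratio $q_{n+1}/q_n$ large enough (depending on $M$) that $M/q < \tfrac{1}{2q_{n-1}}$ throughout the window, which then forces the approximation to be governed by the scale-$n$ structure. One also has to check that the constraints coming from different scales $n$ are mutually compatible — i.e. that $\Omega_n$ is nonempty and contains an interval at every stage — which is a matter of verifying that the "forbidden" sub-intervals coming from the finitely many coprime $q$ in the window have total length a small fraction of $\tau/q_n$; this is quantitative but routine given the prime-counting bounds \eqref{disprime3} already in hand. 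Finally, arranging that $\gamma \notin \alpha\Z + \Z$ (so the statement is nonvacuous) and that $\Omega$ is in fact dense costs only minor extra bookkeeping, by allowing the base interval at stage $0$ to range over a dense collection of choices.
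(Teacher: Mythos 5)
Your overall architecture is the same as the paper's (a rapidly growing, arithmetically structured continued fraction for $\alpha$; nested intervals centered at orbit points $b\alpha$; a rigidity statement forcing every potential solution of \eqref{Mar182019} into a structured form; a divisibility obstruction to coprimality; a Cantor-type branching for uncountability and density). But the core arithmetic step, as you set it up, fails. Forcing one prime per scale to divide $q_n$ does not cover all solutions: since $M$ may be large, in the lower part of each window (where $q\asymp q_n$ and hence $M/q\asymp M/q_n$), and at the crossover between consecutive scales, there are solutions of the form $q=j_n+aq_n+bq_{n-1}$ with $0<|b|\le C(M)$ — the perturbation $bq_{n-1}$ costs only about $|b|\,\|q_{n-1}\alpha\|\asymp|b|/q_n$, which fits inside the allowance $M/q$; higher in the window, choices $a\approx b\,\|q_{n-1}\alpha\|/\|q_n\alpha\|$ produce near-cancellations with $b\neq0$ as well. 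Since $\gcd(q_{n-1},q_n)=1$, such $q$ are not divisible by your prime, and no growth condition on $q_{n+1}/q_n$ removes them: your requirement $M/q<\tfrac1{2q_{n-1}}$ only excludes offsets from scales below $n-1$. Moreover, even for the aligned solutions $q=(r_n+a)q_n$, divisibility of the companion $p$ by the same prime is not automatic: $p\approx q\alpha-\gamma=(r_n+a)p_n+(\text{bounded integer})+o(1)$ and $\gcd(p_n,q_n)=1$, so the residue of $p$ modulo your prime varies with the offset $a$; one prime cannot divide $p$ for all admissible offsets simultaneously. The appeal to ``the Claim 1 / Claim 2 mechanism of \S 2 with $\gamma$ in place of $0$'' does not supply this: those claims describe which $k$ satisfy $p\mid(\lfloor k\alpha\rfloor,k)$, they do not force divisibility of $p$ for every solution.

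This is precisely where the paper's proof does its work, and it is the ingredient missing from your proposal. The paper first proves the rigidity statement (Theorem \ref{keythmdi}): every solution with $b_k\le q<b_{k+1}$ has the form $q=b_{k+1}-aq_k$ or $q=b_k+aq_k+bq_{k-1}$ with $0\le a\le C$, $|b|\le C$ — note that $b\neq0$ is allowed, confirming the obstruction above. It then kills \emph{all} these cases at once by assigning a distinct prime $p^{t,j}$ to every possible bounded residue offset $(t,j)$, choosing all partial quotients divisible by $P=\prod p^{t,j}$ so that $q_k\equiv1$ and $p_k\equiv0,1\pmod P$ (hence the offsets $aq_k+bq_{k-1}$ and $ap_k+bp_{k-1}$ contribute only boundedly many residues), and constructing the centers $b_k$ by the Chinese Remainder Theorem combined with a simultaneous equidistribution lemma (Lemma \ref{lema4.2}) and an inductive stability lemma (Lemma \ref{keyle3}) so that $b_k\equiv t$ \emph{and} $\lfloor b_k\alpha\rfloor\equiv j$ modulo $p^{t,j}$ for every pair $(t,j)$; then whichever offset a solution realizes, the corresponding prime divides both $q$ and $\lfloor q\alpha\rfloor$. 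Finally, your fallback — retaining at each stage a subinterval avoiding the ``forbidden'' intervals — is not quantitatively routine: near the bottom of a window the bad intervals have length about $M/q\asymp M/q_n$, comparable to the scale of the interval you are trying to retain, so these solutions cannot be avoided by shrinking; they must be rendered non-coprime, which is exactly what the multi-prime residue construction is for.
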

We need some preparations first. In this section, all the large  constants $C,C_1$ and $C_2$  only depend on $M$.
In the following arguments, we assume $C_2,C_1,C\in \N$ and
$$C_2>>C_1>>C.$$
Let $p^1=2,p^2,p^3,\cdots,p^n$ be the successive prime numbers with some $n= (2C_1+1)^2$ and let $$P=p^1p^2\cdots p^n.$$

Define $a_k=\hat{l}_kP$ for $\hat{l}_k\in \N$, $k=1,2,\cdots$. In the following construction, we need that $\hat{l}_{k}>C_2$.
Let $\alpha=[a_1,a_2,\cdots,a_k,\cdots]$ and $\frac{p_k}{q_k}=[a_1,a_2,\cdots,a_k]$.
Then $p_0=0, p_1=1,q_0=1, q_1=a_1$ and
\begin{equation}\label{Gpk}
     p_{k}=a_kp_{k-1}+p_{k-2},
\end{equation}
and
\begin{equation}\label{Gqk}
    q_{k}=a_kq_{k-1}+q_{k-2}.
\end{equation}
Thus, we have
\begin{equation}\label{Gmqk}
    q_k\equiv 1\mod P,
\end{equation}
and
\begin{equation}\label{Gmpkeven}
    p_{2k}\equiv 0\mod P,
\end{equation}
and
\begin{equation}\label{Gmpkodd}
    p_{2k+1}\equiv 1\mod P.
\end{equation}
Since $\hat{l}_{k}>C_2$, we have
\begin{equation}\label{Ggrowthqk}
    q_{k+1}\geq C_2q_k.
\end{equation}

 We assign each pair $(t,j) $, $|t|\leq C_1, |j|\leq C_1$ a different prime number $p^{t,j}$. We can randomly choose $p^{t,j}$ so that
$p^{t,j}$ is a permutation  of  a subset of prime numbers $p^1,p^2,\cdots, p^n$ with $n=(2C_1+1)^2$.  We also assume $\alpha$ is given by \eqref{Gpk}-\eqref{Ggrowthqk}.

The plan is to construct a sequence   $ \{b_k\}$ such that  for all $k$,
$ b_{k}\equiv t\mod p^{t,j}$, $ \lfloor b_{k}\alpha\rfloor\equiv j\mod p^{t,j},$
for all  $|t|\leq C_1,|j| \leq C_1$.
This will be done by induction.
We then construct nested  intervals $\{I_k\}\subset [0,1)$ centered at $b_k\alpha\mod \Z$  and $\lim |I_k|=0$.
We will show that for $ \gamma=\cap I_k$ there are only finitely many
coprime solutions to \eqref{Mar182019} .  Here is the sketch of the argument.

Suppose   \eqref{Mar182019}  has infinitely many coprime solutions. We
will show (Theorem \ref{keythmdi}) that solutions $(p,q)$  must have the structure $p=\lfloor q\alpha\rfloor$ and $q=b_{k}+d_k q_k+r_kq_{k-1}$ with $  |d_k|\leq C$ and $|r_k|\leq C$ for some $k$.  By  \eqref{Gmqk}-\eqref{Gmpkodd}, the remainders of $d_k q_k+r_kq_{k-1}, \lfloor (d_k q_k+r_kq_{k-1})\alpha\rfloor
\mod p^{t,j}$ for all $|t|,|j|\leq C_1$ are bounded by  $(2C+1)^2$. It will imply that for some $(t_0,j_0)\in [-C_1,C_1]\times [-C_1,C_1]$, both
 $\lfloor q\alpha\rfloor \mod p^{t_0,j_0}$ and $ q \mod p^{t_0,j_0} $ are zero.
This is a contradiction.

To start with the construction of $b_k$, clearly, we can find $ b_1\equiv t\mod p^{t,j}$ by the Chinese
Remainder Theorem. Simultaneously achieving $ \lfloor
b_1\alpha\rfloor\equiv j\mod p^{t,j}$ requires
$b_1\alpha/p^{t,j} \mod \Z$ belonging to a certain interval
of length $1/p^{t,j}.$  In fact, in order to proceed with inductive
construction of $b_k$, we will need a little more: that we can
guarantee $b_1\alpha/p^{t,j}\mod \mathbb{Z}$ in slightly shrunk
intervals. The following lemma is a preparation for that.

%
\begin{lemma}\label{lema4.2}
Suppose  $\hat{p}^1,\hat{p}^2,\cdots \hat{p}^k$ are distinct    prime numbers.  Let $\hat{P}=\hat{p}^1\hat{p}^2\cdots \hat{p}^k$.
Then there exists a small $\delta>0$ and a large constant  $\bar{L}>0$
(both depending on $ \hat{p}^1,\hat{p}^2,\cdots \hat{p}^k$)  such that
for any $\alpha$ with $a_1\geq \bar{L}$, any given box  $I=I_1\times
I_2\times \cdots \times I_k\subset \T^k$ with $|I_i|\geq
\frac{1}{\hat{p}^i}-\delta$ for $i=1,2,\cdots,k$,  any $L\geq a_1+1$ and  any $L_0,$
there exists
some $j\in\{L_0,L_0+1,\cdots,L_0+L\}$ such that
\begin{equation*}
    (j\frac{\hat{P}}{\hat{p}^1}\alpha,j\frac{\hat{P}}{\hat{p}^2}\alpha,\cdots, j\frac{\hat{P}}{\hat{p}^k}\alpha)\in I \mod \Z^k.
\end{equation*}
\end{lemma}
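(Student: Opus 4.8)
The plan is to reduce the statement to a simultaneous approximation problem for the $k$-tuple $\left(\frac{\hat P}{\hat p^1}\alpha,\dots,\frac{\hat P}{\hat p^k}\alpha\right)$ and then exploit that these $k$ real numbers are rationally independent modulo $\Z$ in a quantitatively controlled way, using that $a_1$ is large. First I would observe that since $\hat p^1,\dots,\hat p^k$ are distinct primes, for each fixed $i$ the number $\frac{\hat P}{\hat p^i}$ is coprime to $\hat p^i$, so modulo $\hat p^i$ the sequence $j\mapsto j\frac{\hat P}{\hat p^i}$ runs through all residues; the subtlety is that we want the \emph{real} quantities $j\frac{\hat P}{\hat p^i}\alpha\bmod\Z$ to land in a prescribed box, not just residues mod $\hat p^i$. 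The governing observation is that $\hat P\alpha$ is very close to a rational with denominator dividing $\hat P$ when $\alpha$ itself is close to $0$ (which is forced by $a_1\geq\bar L$ large, since then $\alpha<1/a_1\leq 1/\bar L$); more precisely $\frac{\hat P}{\hat p^i}\alpha$ is within $O(\hat P/(a_1\hat p^i))$ of a value of the form $\frac{c_i}{\hat p^i}$ for suitable $c_i$, once we track things carefully — but that alone is not enough since we need genuine density of the orbit.

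The cleaner route, which I would actually carry out, is: set $\beta=\frac{\hat P}{\hat p^1\cdots\hat p^k}\cdot(\text{appropriate combination})$ — no; rather, note $\frac{\hat P}{\hat p^i}\alpha = \hat p^{i'}\cdots\hat\alpha$ and work with the single number $\hat P\alpha\bmod\Z$. Write $j\frac{\hat P}{\hat p^i}\alpha = \frac{1}{\hat p^i}\cdot j\hat P\alpha$. Thus the whole tuple is determined by the single quantity $\theta:=\hat P\alpha$ via the map $\theta\mapsto\left(\frac{j\theta}{\hat p^1},\dots,\frac{j\theta}{\hat p^k}\right)\bmod\Z^k$. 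Decompose $j$ in its residues: for each $i$ the $i$-th coordinate $\frac{j\theta}{\hat p^i}\bmod\Z$ depends on $j\bmod\hat p^i$ and on the fractional part of $\theta$. By the Chinese Remainder Theorem the residue vector $(j\bmod\hat p^1,\dots,j\bmod\hat p^k)$ takes every value in $\prod\Z/\hat p^i\Z$ as $j$ ranges over any $\hat P$ consecutive integers, hence over $\{L_0,\dots,L_0+L\}$ once $L\geq a_1+1\geq\hat P$ (this is where I would use $a_1\geq\bar L$: choose $\bar L\geq\hat P$, so $L\geq a_1+1>\hat P$). For the chosen residue class, the $i$-th coordinate is pinned down to within $\frac{|L|\cdot|\theta-\lfloor\theta\rceil_{\hat p^i}\text{-part}|}{\hat p^i}$-type error; since $|\alpha|<1/a_1$ and $\hat P$ is fixed, $\theta=\hat P\alpha$ has $\|\theta\|=\|\hat P\alpha\|\leq \hat P/a_1$, and the accumulated displacement over $\leq\hat P$ steps is $O(\hat P^2/a_1)$, which is $<\delta$ for $a_1\geq\bar L$ with $\bar L$ chosen large depending on $\hat P$ and $\delta$. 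So picking $j$ in the right residue class lands the tuple within $\delta$ of a fixed lattice point $\left(\frac{c_1}{\hat p^1},\dots,\frac{c_k}{\hat p^k}\right)$; and since each $I_i$ has length $\geq\frac{1}{\hat p^i}-\delta$, by pigeonhole each $I_i$ contains a point of the form (that coordinate value) once we also allow shifting $j$ by a multiple of $\hat p^i$ to cycle the residue $c_i$ through all $\hat p^i$ values — wait, that would change other coordinates.

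To fix the coupling I would instead argue coordinate-by-coordinate \emph{after} fixing the residue vector: having fixed $j_0\bmod\hat P$, the full set of admissible $j$ in the window is $\{j_0, j_0+\hat P, j_0+2\hat P,\dots\}$, and along this subprogression the tuple moves by the fixed small vector $\hat P\cdot\left(\frac{\theta}{\hat p^1},\dots,\frac{\theta}{\hat p^k}\right)=\left(\frac{\hat P\theta}{\hat p^1},\dots\right)$ per step, each coordinate of which has absolute value $\leq\|\hat P^2\alpha\|\leq\hat P^2/a_1<\delta$; so over $\hat p^{\max}$ such steps, in each coordinate we sweep an interval of length comparable to $1$ in increments $<\delta$, hence we hit every target interval of length $\geq\frac{1}{\hat p^i}-\delta$; we need one common $j$, so we take the product: within $L\geq a_1+1\geq\hat P\cdot\prod\hat p^i$ consecutive integers (ensure $\bar L$ that big) the CRT gives us the residue class and then a further sweep realizes all boxes. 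The main obstacle is exactly this decoupling of the $k$ coordinates: they are all driven by the single parameter $\theta$, so one cannot move them independently, and the resolution is that the per-step drift vector has \emph{all} coordinates simultaneously tiny (controlled uniformly by $\hat P^2/a_1$), so a single long enough arithmetic progression in $j$ traces out a fine enough net in the torus $\T^k$ to meet any box of the stated size. I would choose $\delta<\min_i\frac{1}{2\hat p^i}$ and $\bar L$ large enough that $\hat P^2/\bar L<\delta$ and $\bar L> \hat P\prod_i\hat p^i$; then the above produces the required $j\in\{L_0,\dots,L_0+L\}$.
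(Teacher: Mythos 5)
Your reduction to the single quantity $\theta=\hat P\alpha$ is fine, but the two steps you build on it do not hold, and between them the actual content of the lemma goes missing. First, the $i$-th coordinate $\{j\tfrac{\hat P}{\hat p^i}\alpha\}$ does \emph{not} depend on $j\bmod \hat p^i$: since $\tfrac{\hat P}{\hat p^i}\alpha$ is irrational, fixing the residue class of $j$ modulo $\hat P$ pins nothing. What is relevant is $\lfloor j\hat P\alpha\rfloor\bmod \hat p^i$ together with $\{j\hat P\alpha\}$, and even controlling those would only place the $i$-th coordinate in an interval of length $1/\hat p^i$ around $c_i/\hat p^i$ --- not within $\delta$ of a lattice point, as you claim. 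So your CRT step, applied to residues of $j$, is vacuous. Second, and more seriously, the concluding assertion that ``a single long enough arithmetic progression in $j$ traces out a fine enough net in the torus $\T^k$'' is false: every point $(j\tfrac{\hat P}{\hat p^1}\alpha,\dots,j\tfrac{\hat P}{\hat p^k}\alpha)$ lies exactly on the one-dimensional closed curve $t\mapsto(t,\tfrac{\hat p^1}{\hat p^2}t,\dots,\tfrac{\hat p^1}{\hat p^k}t)\bmod\Z^k$ (take $t=j\tfrac{\hat P}{\hat p^1}\alpha$), a curve of bounded length, so the orbit can never form a net in $\T^k$; it is only dense along this curve. (Your intermediate sentence is also internally inconsistent: $\hat p^{\max}$ steps each of size $<\delta$ sweep length $<\hat p^{\max}\delta$, not ``comparable to $1$''.)

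The heart of the lemma is exactly the point you end up asserting: that this particular curve meets \emph{every} box with side lengths $1/\hat p^i-\delta$, and meets it robustly, on a parameter interval of length bounded below in terms of the primes only, so that the points $\{j\alpha\}$, $j\in\{L_0,\dots,L_0+L\}$, which are merely $\tfrac1{a_1}$-dense on $\T$, can be steered into the corresponding window. This is the paper's Claim 3 (plus the quantitative refinement via the bound $N$ on the number of segments), proved by induction on the number of primes using an inverse of $\hat p^1\cdots\hat p^k$ modulo $\hat p^{k+1}$ --- i.e.\ a CRT argument applied to the integer parts along the curve, not to the residues of $j$. A corrected version of your decomposition (tracking $\lfloor j\hat P\alpha\rfloor\bmod\hat P$ and $\{j\hat P\alpha\}$, and showing that a positive-length set of fractional parts admits compatible residues for all $i$ simultaneously) could be pushed through and would essentially reproduce that argument, but as written your proposal has a genuine gap: the key arithmetic fact is assumed rather than proved, and the steps offered in its place are incorrect.
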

\begin{proof}
Let us consider the map $\psi_k:\R\to \T^k$,
\begin{equation}\label{Gpsit}
    \psi_k(t)=(t,\frac{\hat{p}^1}{\hat{p}^2}t,\cdots,\frac{\hat{p}^1}{\hat{p}^k}t)\mod \Z^k.
\end{equation}
See Fig.1. Since $\psi_k(\hat{p}^2\hat{p}^3\cdots \hat{p}^k)=0\mod \Z^k$, identifying $\T^k$ with $[0,1)^k$, we have that there exists some $N=N(\hat{p}^1,\hat{p}^2,\cdots, \hat{p}^k)$   such that the image of $\psi_k\subset [0,1)^k$ consists  of at most
$N$ segments.
\begin{center}
\begin{tikzpicture}[xscale=1.5,yscale=0.9]

\draw [-](0,0)--(0,5);
\draw [-](0,0)--(3,0);
\draw [-](3,5)--(3,0);
\draw [-](0,5)--(3,5);

\draw[-](0,0)--(3,3);
\draw[-](1,0)--(3,2);
\draw[-](2,0)--(3,1);

\draw[-](0,1)--(3,4);
\draw[-](0,2)--(3,5);
\draw[-](0,3)--(2,5);
\draw[-](0,4)--(1,5);

\draw[-](1,1)--(1,2);
\draw[-](1,1)--(2,1);
\draw[-](1,2)--(2,2);
\draw[-](2,2)--(2,1);

\draw[-](1.1,1.1)--(1.1,1.9);
\draw[-](1.1,1.1)--(1.9,1.1);
\draw[-](1.1,1.9)--(1.9,1.9);
\draw[-](1.9,1.9)--(1.9,1.1);

\draw[-](1.7,3.0)--(1.7,4.0);
\draw[-](1.7,3.0)--(2.7,3.0);
\draw[-](1.7,4.0)--(2.7,4.0);
\draw[-](2.7,4.0)--(2.7,3.0);
\
\draw[-](1.8,3.1)--(1.8,3.9);
\draw[-](1.8,3.1)--(2.6,3.1);
\draw[-](1.8,3.9)--(2.6,3.9);
\draw[-](2.6,3.9)--(2.6,3.1);
\node [below] at (1.5,0){Fig.1: two prime numbers};
\node [above] at (1.5,5){ $\hat{p}^1=3,\hat{p}^2=5$};
\end{tikzpicture}
\end{center}
{\bf Claim 3:} For any closed box  $\hat{I}^k=\hat{I}_1\times \hat{I}_2\times \cdots \times \hat{I}_k$ with $|\hat{I}_i|= \frac{1}{\hat{p}^i}$ for $i=1,2,\cdots,k$,  and for any $t_1\in \hat{I}_1$, there exists some
$t\equiv t_1\mod \Z$ such that  $\psi_k(t)\in \hat{I}^k \mod \Z$.

We will prove Claim 3  by induction. For $k=1$, it is trivial. Suppose it holds for $k$. Thus for any $t_1\in \hat{I}_1$, there exists some $t_2$ such that
$t_2\equiv t_1\mod \Z$ and $\psi_k(t)\in \hat{I}^k$. Since all the $\hat{p}^i$ are prime numbers, there exists  $q$ such that
$q \hat{p}^1\hat{p}^2\cdots\hat{p}^k\equiv 1\mod \hat{p}^{k+1}$. Then there exists some $0\leq j\leq \hat{p}^{k+1}-1$ such that
\begin{equation*}
  jq \hat{p}^1\hat{p}^2\cdots\hat{p}^k \frac{1}{\hat{p}^{k+1}} +\frac{\hat{p}^1}{\hat{p}^{k+1}} t_2\in \hat{I}_{k+1} \mod \Z,
\end{equation*}
since $|\hat{I}_{k+1} |=\frac{1}{\hat{p}^{k+1}}$ and $\hat{I}_{k+1} $ is closed.
Let $t_3= jq \hat{p}^2\cdots\hat{p}^k+t_2$.
Then $\frac{\hat{p}^1}{\hat{p}^{k+1}}t_3\in \hat{I}_{k+1}$ and $\psi_k(t_3)\in \hat{I}_k$ since $t_3\equiv t_2\mod \Z$.
Thus $\psi_{k+1}(t_3)\in \hat{I}^{k+1}$.\qed

Thus by Claim 3  and the fact that
 the image of $\psi_k\subset [0,1)^k$ consists of at most $N$ segements, for  any closed interval  $\hat{I}^k=\hat{I}_1\times \hat{I}_2\times \cdots \times \hat{I}_k$ with $|\hat{I}_i|= \frac{1}{\hat{p}^i}$ for $i=1,2,\cdots,k$,
 there exists some $\hat{\hat{I}}_1\subset [0,1) $  with $|\hat{\hat{I}}_1|\geq \frac{1}{2N \hat{p}^1}$  and $ 0\leq \hat{j}_0<\hat{p}^2\hat{p}^3\cdots\hat{p}^k$ such that   $\psi_k(t) \mod \Z^k\in \hat{I}^k $ for all $t\in \hat{\hat{I}}_1 +\hat{j}_0$. We mention that we use the fact that  $\psi_k$ is a map with period
 $\hat{p}^2\hat{p}^3\cdots\hat{p}^k$.

Let $\bar{L}= 3N \hat{P}$ and take $\alpha$ with $a_1\geq L.$ By the continued fraction expansion
  the set $\{ \alpha,2 \alpha,\cdots,   L \alpha\}$ is
$\frac{1}{3N \hat{P}}$ dense on the torus $\T$ if $L\geq a_1+1$. Let $0<\delta<<\frac{1}{12N \hat{P}}$. Now we will show  that  $L$ and $\delta$ satisfy the requirements of Lemma \ref{lema4.2}.

Indeed, suppose box $I=I_1\times I_2\times \cdots \times I_k$ has  $|I_i|\geq \frac{1}{\hat{p}^i}-\delta$ for $i=1,2,\cdots,k$.
Then, since the slopes of components of $\psi_k$ are bounded from
below by $\min\{\frac{\hat{p}^1}{\hat{p}^k}\}\geq \frac{\hat{p}^1}{\hat{P}}$,
there exists some $\tilde{I}_1\subset \hat{\hat{I}}_1$
with $ |\tilde I_1|\geq
\frac{1}{3N\hat{p}^1}$
such that  for any $m\in \Z$, $\psi_k(t) \in I \mod \Z^k $ for all $t\in m \hat{p}^2\hat{p}^3\cdots\hat{p}^k+j_0+\tilde{I}_1 $. We mention that we use again the fact that  $\psi_k$ is a map with period
 $\hat{p}^2\hat{p}^3\cdots\hat{p}^k$.
By the fact that  the set $\{ \alpha,2 \alpha,\cdots,  L \alpha\}$  is $\frac{1}{3N \hat{P}}$ dense on torus $\T$, we  have that  there exists some $j\in\{L_0,L_0+1,\cdots,L_0+L\}$  and $m_0\in \Z$ such that
\begin{equation*}
    j\alpha\in m_0+\frac{j_0}{\hat{p}^2\hat{p}^3\cdots\hat{p}^k}+\frac{\tilde{I}_1 }{\hat{p}^2\hat{p}^3\cdots\hat{p}^k}.
\end{equation*}
This implies
\begin{equation*}
    t_j=j\frac{\hat{P}}{\hat{p}^1}\alpha\in m_0 \hat{p}^2\hat{p}^3\cdots\hat{p}^k+j_0+\tilde{I}_1,
\end{equation*}
and then
\begin{equation*}
   \psi_k(t_j) =(j\frac{\hat{P}}{\hat{p}^1}\alpha,j\frac{\hat{P}}{\hat{p}^2}\alpha,\cdots, j\frac{\hat{P}}{\hat{p}^k}\alpha)\in I \mod \Z^k.
\end{equation*}

\end{proof}
\begin{lemma}\label{keyle1}
Let $p$ be a prime number. Suppose
\begin{equation*}
    b \equiv t\mod p.
\end{equation*}
Then
$\lfloor b\alpha +\gamma_j\rfloor\equiv j \mod p$ iff
\begin{equation*}
      \frac{(b-t)}{p} \alpha  \in [\frac{j-t\alpha-\gamma_j}{p},\frac{j+1-t\alpha-\gamma_j}{p})\mod \Z.
\end{equation*}
\end{lemma}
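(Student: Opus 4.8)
The plan is a direct unfolding of the definition of the floor function, using the hypothesis $b\equiv t\bmod p$. First I would write $b-t=pm$ with $m=\frac{b-t}{p}\in\Z$, so that $b\alpha=t\alpha+pm\alpha$. The assertion $\lfloor b\alpha+\gamma_j\rfloor\equiv j\bmod p$ means precisely that there exists an integer $q$ with $\lfloor b\alpha+\gamma_j\rfloor=j+pq$, which by the defining property of $\lfloor\cdot\rfloor$ is equivalent to
\[
j+pq\le b\alpha+\gamma_j<j+pq+1 .
\]

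Next I would substitute $b\alpha=t\alpha+pm\alpha$ into this double inequality, move $t\alpha+\gamma_j$ to the left and $pq$ to the right, and divide through by $p>0$. This turns the condition into
\[
\frac{j-t\alpha-\gamma_j}{p}\le m\alpha-q<\frac{j+1-t\alpha-\gamma_j}{p},
\]
that is, $m\alpha-q\in\bigl[\tfrac{j-t\alpha-\gamma_j}{p},\tfrac{j+1-t\alpha-\gamma_j}{p}\bigr)$ for that same integer $q$.

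Finally I would observe that the interval $\bigl[\tfrac{j-t\alpha-\gamma_j}{p},\tfrac{j+1-t\alpha-\gamma_j}{p}\bigr)$ has length $\frac1p<1$, so its image in $\T=\R/\Z$ is a genuine half-open arc and the statement ``$m\alpha$ lies in it $\bmod\Z$'' is unambiguous; moreover, because the length is $<1$, this holds if and only if $m\alpha-q$ lies in the interval for some integer $q$. Chaining the three displays together and recalling $m=\frac{b-t}{p}$ yields exactly the claimed equivalence, with both implications passing through the same reversible chain. There is essentially no obstacle here beyond bookkeeping; the only points requiring a little care are keeping the half-open interval correctly oriented under division by $p$ and noting that the length-$<1$ condition makes reduction mod $\Z$ reversible.
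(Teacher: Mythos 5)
Your proof is correct and is essentially the paper's own argument: the paper writes $b=kp+t$ and splits $\lfloor b\alpha+\gamma_j\rfloor=p\lfloor k\alpha\rfloor+\lfloor p\{k\alpha\}+t\alpha+\gamma_j\rfloor$, which amounts to the same direct unfolding of the floor/congruence definitions that you carry out via the inequality $j+pq\le b\alpha+\gamma_j<j+pq+1$. Both directions follow from the same reversible chain, so nothing further is needed.
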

\begin{proof}
Let $b=kp+t$.
Suppose $\lfloor b\alpha+\gamma_j\rfloor\equiv j \mod p$.
Using $ k \alpha=\lfloor k\alpha\rfloor+\{k\alpha\}$, one has
\begin{equation*}
     \lfloor b\alpha +\gamma_j\rfloor=  p \lfloor  k\alpha\rfloor+\lfloor p \{k\alpha\}+t\alpha+\gamma_j\rfloor \equiv j \mod p
\end{equation*}
This implies
\begin{equation*}
     k \alpha  \in [\frac{j-t\alpha-\gamma_j}{p},\frac{j+1-t\alpha-\gamma_j}{p})\mod \Z.
\end{equation*}
The proof of the  other  side is similar. We omit the details.
\end{proof}
In the following, we always assume $a_1\geq \bar{L}$.
\begin{lemma}\label{keyle2}
There exist  a small $\delta>0$ (independent of $\alpha$) and
 $b_1\in \N$ such that for all $|t|,|j|\leq C_1$,
\begin{equation}\label{Gbk}
    b_1\equiv t\mod p^{t,j}
\end{equation}
and
\begin{equation}\label{Gbkinitial}
     \frac{(b_1-t)}{p^{t,j}} \alpha \in (\frac{j+\delta-t\alpha}{p^{t,j}},\frac{j+1-\delta-t\alpha}{p^{t,j}})\mod \Z.
\end{equation}
\end{lemma}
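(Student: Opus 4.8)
The plan is to realize the prime-by-prime congruence conditions \eqref{Gbk} via the Chinese Remainder Theorem, and then to upgrade the "lands in the correct residue" conclusion of Lemma \ref{keyle1} to the "lands strictly inside a slightly shrunk subinterval" conclusion \eqref{Gbkinitial}, all simultaneously over the finitely many pairs $(t,j)$ with $|t|,|j|\le C_1$, by invoking Lemma \ref{lema4.2}. Let me set up the bookkeeping. Write $\hat P=P=p^1p^2\cdots p^n$ for the product of all the primes $p^{t,j}$, $n=(2C_1+1)^2$ (recall each $p^{t,j}$ is one of $p^1,\dots,p^n$, so this is just a relabeling). For a prospective $b_1$, the condition $b_1\equiv t\bmod p^{t,j}$ over all $|t|\le C_1$ (the value $t$ depends on the pair) pins down $b_1\bmod P$ to a single residue class, say $b_1\equiv \beta\bmod P$; fix any such $\beta\in\{0,1,\dots,P-1\}$. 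We will look for $b_1$ in the arithmetic progression $\beta+P\,\mathbb{Z}_{\ge 0}$; writing $b_1=\beta+P m$, the quantity $(b_1-t)/p^{t,j}$ that appears in Lemma \ref{keyle1} equals $(\beta-t)/p^{t,j}+m\,P/p^{t,j}$, which as $m$ varies runs (mod $\mathbb Z$) through a translate of the orbit of $m\mapsto m\,P/p^{t,j}\,\alpha$.

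First I would fix, for each pair $(t,j)$, the target interval
\[
  I_{t,j}=\Bigl(\tfrac{j+\delta-t\alpha}{p^{t,j}}-\tfrac{(\beta-t)\alpha}{p^{t,j}},\ \tfrac{j+1-\delta-t\alpha}{p^{t,j}}-\tfrac{(\beta-t)\alpha}{p^{t,j}}\Bigr)\subset\mathbb T,
\]
so that $b_1=\beta+Pm$ satisfies \eqref{Gbkinitial} for the pair $(t,j)$ exactly when $m\,P/p^{t,j}\,\alpha\in I_{t,j}\bmod\mathbb Z$. Each $I_{t,j}$ has length $(1-2\delta)/p^{t,j}\ge 1/p^{t,j}-\delta'$ for a suitable $\delta'$ comparable to $\delta$; shrink $\delta$ if necessary so that $\delta'$ is below the threshold $\delta$ furnished by Lemma \ref{lema4.2} applied to the full list $\hat p^1,\dots,\hat p^n$ of primes. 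Then the box $I=\prod_{(t,j)}I_{t,j}\subset\mathbb T^n$ meets the hypothesis of Lemma \ref{lema4.2}, and since $a_1\ge\bar L$ (which we are assuming, and which can be arranged because $a_1=\hat l_1 P$ with $\hat l_1>C_2$ free to be taken large), Lemma \ref{lema4.2} — applied with $L_0=0$ and any $L\ge a_1+1$ — produces an $m$ with $(m\,P/\hat p^1\,\alpha,\dots,m\,P/\hat p^n\,\alpha)\in I\bmod\mathbb Z^n$. Setting $b_1=\beta+Pm$ and invoking Lemma \ref{keyle1} (with $\gamma_j=0$) for each coordinate gives both \eqref{Gbk} and \eqref{Gbkinitial}.

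Two small points need care. First, Lemma \ref{lema4.2} is stated for the map $\psi_k$ built from ratios $\hat p^1/\hat p^i$, i.e. for $j\mapsto (j\hat P/\hat p^i\,\alpha)_i$, whereas here the $i$-th coordinate we need is $m\,P/p^{t,j}\,\alpha$ with $P=\hat P$; so one just has to match the indexing of the primes $\hat p^i$ with the pairs $(t,j)$, which is a cosmetic relabeling. Second — and this is the only genuine subtlety — the constant $\delta$ in the statement must be declared \emph{independent of $\alpha$}: this is fine because the $\delta$ coming out of Lemma \ref{lema4.2} depends only on the prime list $\hat p^1,\dots,\hat p^n$, which in turn depends only on $C_1$ (hence only on $M$), not on $\alpha$; the only $\alpha$-dependent requirement, $a_1\ge\bar L$, we have already absorbed into our standing assumption on $\alpha$. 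The main obstacle is purely notational rather than mathematical: keeping the translation-by-$(\beta-t)\alpha/p^{t,j}$ shift straight in each coordinate and verifying that the shrinking by $\delta$ on both ends of each interval leaves enough room to stay within the tolerance $\delta$ that Lemma \ref{lema4.2} can accommodate — once the length bookkeeping is done, the rest is a direct quotation of Lemmas \ref{lema4.2} and \ref{keyle1}.
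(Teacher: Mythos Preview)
Your argument is correct and follows essentially the same route as the paper's proof: use the Chinese Remainder Theorem to fix the residue class $b_1\equiv\beta\bmod P$, write $b_1=\beta+Pm$, and then apply Lemma~\ref{lema4.2} to hit the product of the (shifted, slightly shrunk) target intervals simultaneously. You are in fact a bit more careful than the paper, which writes the reduced condition as $P^{t,j}l\alpha\in(\ldots)\bmod\Z$ without explicitly displaying the shift by $\frac{(b_0-t)}{p^{t,j}}\alpha$; your $I_{t,j}$ makes this shift explicit, and your remark that the $\delta$ from Lemma~\ref{lema4.2} depends only on the prime list (hence only on $C_1$) is exactly what justifies the ``independent of $\alpha$'' clause. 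One minor point: the final appeal to Lemma~\ref{keyle1} is unnecessary here, since \eqref{Gbkinitial} is already the interval condition you have arranged directly; Lemma~\ref{keyle1} is only needed later when one wants to read off $\lfloor b_1\alpha\rfloor\bmod p^{t,j}$.
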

\begin{proof}
By  the Chinese remainder theorem, there exists $b$ such that
\begin{equation*}
    b_0\equiv t\mod p^{t,j}
\end{equation*}
for all $|t|,|j|\leq C_1$.

If $b_1=b_0+lP$, we also have
\begin{equation*}
    b_1\equiv t\mod p^{t,j}
\end{equation*}
for all $|t|,|j|\leq C_1$.

Suppose $\delta>0$ is small enough (only depends on $p^{t,j}$).
We only need to choose proper $l\in \N$  such that for all $t$ and $j$,
\begin{equation}\label{Gkeyle21}
    \frac{(b_1-t)}{p^{t,j}} \alpha \in (\frac{j+\delta-t\alpha}{p^{t,j}},\frac{j+1-\delta-t\alpha}{p^{t,j}})\mod \Z.
\end{equation}
Let $P^{t,j}=\frac{P}{p^{t,j}}$.
Thus \eqref{Gkeyle21} is equivalent to
\begin{equation*}
     P^{t,j} l \alpha \in (\frac{j+\delta-t\alpha}{p^{t,j}},\frac{j+1-\delta-t\alpha}{p^{t,j}})\mod \Z.
\end{equation*}
The existence of such  $l$ is guaranteed by Lemma \ref{lema4.2}.
\end{proof}

Now we will  construct nested  intervals $\{I_k\}\subset [0,1)$ such that $I_{k+1}\subset I_k $ and $\lim |I_k|=0$.
Here is the detail. Below, $l_k$ is always in $\N$.

Let $b_1$ be given by Lemma \ref{keyle2}.
Using the fact that $ \hat{l}_k>C_2$, one has $ \frac{q_2}{2}>b_1$. Thus we can
define
\begin{equation*}
b_{2}=b_1+l_1P q_1
\end{equation*}
such that
\begin{equation*}
    |b_{2}-\frac{q_{2}}{2}|\leq C P q_1.
\end{equation*}
Inductively, for $k\geq 2$,
define
\begin{equation}\label{keyle}
b_{k+1}=b_k+l_kP q_k
\end{equation}
such that
\begin{equation}\label{star}
    |b_{k+1}-\frac{q_{k+1}}{2}|\leq C P q_k.
\end{equation}
Let us
define
\begin{equation}\label{Ggammadis1}
    I_k=\{\gamma:\gamma\in[0,1), ||\gamma-b_k\alpha||\leq \frac{1}{q_k}\},
\end{equation}
where $b_k$ is given by   \eqref{keyle}.

Let
\begin{equation}\label{Ggammadis}
    \gamma=\cap I_k.
\end{equation}
\begin{remark}\label{Redense}
By modifying $b_1$ and  $l_k$ in construction of $ b_{k+1}$,  we can get a dense and uncountable set of $\gamma$.
\end{remark}
\begin{lemma}\label{keyle3}
Under the construction of \eqref{keyle}, we have that  for all $t,j$,
\begin{equation}\label{Gbk1t}
    b_{k+1}\equiv t\mod p^{t,j}
\end{equation}
and
\begin{equation}\label{Gbkinitialk}
     \frac{(b_{k+1}-t)}{p^{t,j}} \alpha \in (\frac{j+\frac{\delta}{2}-t\alpha}{p^{t,j}},\frac{j+1-\frac{\delta}{2}-t\alpha}{p^{t,j}})\mod \Z.
\end{equation}
\end{lemma}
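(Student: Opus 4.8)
The plan is to control how $b_{k+1}$ drifts from the base point $b_1$ produced by Lemma \ref{keyle2}, which already satisfies \eqref{Gbk} and the slightly stronger version \eqref{Gbkinitial} of the desired conclusion. By \eqref{keyle} (together with $b_2 = b_1 + l_1Pq_1$) one has $b_{k+1} = b_1 + \sum_{i=1}^{k} l_iPq_i$ for every $k\geq 1$. Each prime $p^{t,j}$ with $|t|,|j|\leq C_1$ is one of $p^1,\dots,p^n$, hence divides $P$, so $b_{k+1}\equiv b_1\equiv t\pmod{p^{t,j}}$, which is \eqref{Gbk1t}; in particular $\frac{b_{k+1}-t}{p^{t,j}}\in\Z$ and the quantity in \eqref{Gbkinitialk} is well defined.

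For \eqref{Gbkinitialk}, set $P^{t,j}=P/p^{t,j}\in\Z$ and compute modulo $\Z$:
\[
\frac{(b_{k+1}-t)}{p^{t,j}}\alpha=\frac{(b_1-t)}{p^{t,j}}\alpha+\sum_{i=1}^{k}l_iP^{t,j}q_i\alpha\equiv\frac{(b_1-t)}{p^{t,j}}\alpha+\sum_{i=1}^{k}l_iP^{t,j}(q_i\alpha-p_i)\pmod{\Z},
\]
since $l_iP^{t,j}p_i\in\Z$. Thus it suffices to show the total displacement $\sum_{i=1}^{k}|l_iP^{t,j}(q_i\alpha-p_i)|$ is at most $\frac{\delta}{2p^{t,j}}$. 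From \eqref{star}, \eqref{Gqk}, and $\hat{l}_{i+1}>C_2\gg C$ one gets $q_{i+1}>\hat{l}_{i+1}Pq_i>C_2Pq_i$ and hence $b_{i+1}<q_{i+1}$, so (as $b_i\geq 0$) $l_iPq_i=b_{i+1}-b_i<q_{i+1}$, i.e. $l_iP^{t,j}<q_{i+1}/(q_ip^{t,j})$. Combined with $|q_i\alpha-p_i|\leq 1/q_{i+1}$ from \eqref{GDC2}, this yields $|l_iP^{t,j}(q_i\alpha-p_i)|<\frac{1}{q_ip^{t,j}}$.

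Since $q_i\geq a_1(C_2P)^{i-1}$, the series sums to $\sum_{i\geq 1}\frac{1}{q_ip^{t,j}}<\frac{2}{a_1p^{t,j}}\leq\frac{\delta}{2p^{t,j}}$, the last inequality holding because $a_1\geq\bar{L}$ and we may (harmlessly) enlarge the constant $\bar{L}$ of Lemma \ref{lema4.2} so that $\bar{L}\geq 4/\delta$. Finally, the interval in \eqref{Gbkinitialk} is exactly the $\frac{\delta}{2p^{t,j}}$-neighbourhood on $\T$ of the interval in \eqref{Gbkinitial} (since $j+\delta-\tfrac{\delta}{2}=j+\tfrac{\delta}{2}$ and $j+1-\delta+\tfrac{\delta}{2}=j+1-\tfrac{\delta}{2}$, and both arcs have length $<1$ so no wraparound occurs). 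As $\frac{(b_1-t)}{p^{t,j}}\alpha$ lies in the interval of \eqref{Gbkinitial} and the total torus-displacement from it is $<\frac{\delta}{2p^{t,j}}$, the point $\frac{(b_{k+1}-t)}{p^{t,j}}\alpha$ lies in the interval of \eqref{Gbkinitialk}. The only delicate point is the summability and smallness of the displacement series; this is precisely where the strong lacunarity $q_{i+1}>C_2Pq_i$ built into the construction, and the extra factor $1/p^{t,j}$ gained by dividing by $p^{t,j}$, are used.
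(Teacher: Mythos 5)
Your proof is correct and rests on the same mechanism as the paper's own argument: the drift bound $\|l_iP^{t,j}q_i\alpha\|\le \frac{1}{q_i p^{t,j}}$ coming from \eqref{star}, \eqref{GDC2} and the lacunarity \eqref{Ggrowthqk}, the only difference being that you sum the total displacement from $b_1$ in a single telescoping geometric series, whereas the paper runs an induction with a geometrically shrinking buffer $2^{-(k+2)}\delta$ inside \eqref{Gbkinitialk1}. Your explicit extra requirement $a_1\ge 4/\delta$ (harmlessly enlarging $\bar L$, which only restricts the admissible $\alpha$ in the construction) plays exactly the role of the paper's implicit ``appropriately large'' constant in its inductive step, so the argument is complete.
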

\begin{proof}
The proof of \eqref{Gbk1t} follows from the definition of \eqref{Gbk} and \eqref{keyle}.

We will  give the proof of \eqref{Gbkinitialk} by induction. The base case holds by
 Lemma \ref{keyle2}. Suppose
\begin{equation}\label{Gbkinitialk1}
     \frac{(b_{k}-t)}{p^{t,j}} \alpha \in (\frac{j+\frac{\delta}{2}+2^{-(k+2)}\delta-t\alpha}{p^{t,j}},\frac{j+1-\frac{\delta}{2}-2^{-(k+2)}\delta-t\alpha}{p^{t,j}})\mod \Z.
\end{equation}
By   \eqref{GDC1} and \eqref{GDC2}, one has
\begin{eqnarray*}
   || \frac{ b_{k+1}-b_k}{p^{t,j}}\alpha|| &=&|| l_k P^{t,j}q_k\alpha||\\
   &\leq& \frac{q_{k+1}}{2q_k}\frac{1}{q_{k+1}} \\
    &\leq& \frac{1}{2q_k}\leq\frac{1}{ C_2^k},
\end{eqnarray*}
where the last inequality holds by \eqref{Ggrowthqk}.
By \eqref{Gbkinitialk1},  for appropriately large $C_1$, we have
\begin{equation*}
  \frac{(b_{k+1}-t)}{p^{t,j}} \alpha \in (\frac{j+\frac{\delta}{2}+2^{-k-3}\delta-t\alpha}{p^{t,j}},\frac{j+1-\frac{\delta}{2}-2^{-k-3}\delta-t\alpha}{p^{t,j}})\mod \Z.
\end{equation*}
Then by induction, we finish the proof.
\end{proof}
Thus in order to prove Theorem \ref{thmexp1}, we only need to show that for  the $\gamma$ given by \eqref{Ggammadis},
the inequality
 \begin{equation*}
    |\gamma-q\alpha+p|\leq\frac{M }{ {q}}.
 \end{equation*}
only has finitely coprime solutions  $(p,q)\in \N^2$.

Before, we give the proof, we need another theorem.
\begin{theorem}\label{keythmdi}
Suppose $\gamma$ is given by \eqref{Ggammadis}. Suppose $b_{k}\leq q <b_{k+1}$ and
\begin{equation*}
    ||\gamma-q\alpha||\leq\frac{M }{ {q}}.
\end{equation*}
Then $q$ must have the following form
\begin{itemize}
  \item  Case I: $q=b_{k+1}-a q_k$ with $0\leq a\leq C$.
  \item  Case II: $q=b_{k}+a q_k+bq_{k-1}$ with $0\leq a\leq C$ and $|b|\leq C$.
\end{itemize}

\end{theorem}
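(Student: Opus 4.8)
The plan is to locate $q$ relative to the two anchors $b_k$ and $b_{k+1}$, exploiting that $\gamma$ is squeezed between $b_k\alpha$ and $b_{k+1}\alpha$ modulo $1$. First I would record the size estimates that follow from \eqref{star}, \eqref{Ggrowthqk} and the hierarchy $C_2\gg C_1\gg C$ (recall the parameters are already large, so these hold for all $k$): $q_k/3\le b_k\le q_k$, the gap $b_{k+1}-b_k=l_kPq_k$ satisfies $q_{k+1}/3\le l_kPq_k\le q_{k+1}$ and $l_kP\|q_k\alpha\|\le 1/q_k$, and — comparing $b_{k+1}-b_k$ with $q_{k+1}=a_{k+1}q_k+q_{k-1}$ and using \eqref{star} — the key arithmetic relation $a_{k+1}=2l_kP+O(CP)$. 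Writing $\theta=\gamma-b_k\alpha$ and $\theta'=\gamma-b_{k+1}\alpha$ as reals in $(-1/2,1/2]$ (legitimate since $\gamma$ lies in the intervals \eqref{Ggammadis1}), the identity $\theta=(-1)^k l_kP\|q_k\alpha\|+\theta'$ holds by \eqref{GDC3}, with $|\theta'|\le 1/q_{k+1}$; moreover $\|\gamma-q\alpha\|=\|(b_{k+1}-q)\alpha+\theta'\|$ for every $q$. Since $q\ge b_k\ge q_k/3$ we have $M/q\le 3M/q_k$, so the hypothesis gives the coarse bound $\|(q-b_k)\alpha\|\le (3M+1)/q_k$.

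Second, the coarse localization. Expand $N:=q-b_k$ in the Ostrowski numeration with respect to the $(q_i)$: $N=\sum_i c_{i+1}q_i$ with $0\le c_{i+1}\le a_{i+1}$ and $c_{i+1}=a_{i+1}\Rightarrow c_i=0$, so $N\alpha\equiv\sum_i(-1)^i c_{i+1}\|q_i\alpha\|\pmod 1$. A standard estimate exploiting the digit restriction shows $\|N\alpha\|>\|q_{j+1}\alpha\|$, where $j$ is the position of the lowest nonzero digit; together with $q_{i+1}\ge C_2q_i$ (from \eqref{Ggrowthqk}), the bound $\|N\alpha\|\le (3M+1)/q_k$ forces $j\ge k-2$ once $C_2>2(3M+1)$. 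Hence $N=c_{k+1}q_k+c_kq_{k-1}+c_{k-1}q_{k-2}$ with $c_{k+1}\le l_kP-1$. Substituting $q_{k-2}=q_k-a_kq_{k-1}$ and setting $u:=l_kP-c_{k+1}-c_{k-1}$, $v:=c_k-c_{k-1}a_k$, this becomes $q=b_{k+1}-uq_k+vq_{k-1}=b_k+(l_kP-u)q_k+vq_{k-1}$ and, by the formula above, $\|\gamma-q\alpha\|=\bigl|(-1)^k E+\theta'\bigr|$ with $E:=u\|q_k\alpha\|+v\|q_{k-1}\alpha\|$, so $|E|\le M/q+1/q_{k+1}$.

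Third, the fine analysis of $E$, using that $M/q$ is of order $1/q_k$ only when $q$ is near an anchor and is $\ll 1/q_k$ in between. I would split on $c_{k-1}$. If $c_{k-1}\ge 1$, the digit bounds together with $|v|\le 6M+4$ (forced by $|E|\ge |v|\|q_{k-1}\alpha\|-l_kP\|q_k\alpha\|$ and $|E|\le(3M+1)/q_k$) pin $c_{k-1}=1$ and $c_k\ge a_k-C$; then $\|q_{k-1}\alpha\|=a_{k+1}\|q_k\alpha\|+\|q_{k+1}\alpha\|$ with $a_{k+1}\approx 2l_kP$ yields $|E|\ge\|q_{k-1}\alpha\|-l_kP\|q_k\alpha\|\gtrsim 1/q_k$, which forces $q\le 16Mq_k$, hence $c_{k+1}\le C$; rewriting $c_kq_{k-1}+q_{k-2}=q_k-(a_k-c_k)q_{k-1}$ then puts $q$ in the form $b_k+a'q_k+b'q_{k-1}$ with $0\le a'\le C$, $|b'|\le C$ — Case II. If $c_{k-1}=0$ (so $v=c_k\ge 0$, $u=l_kP-c_{k+1}\ge 1$, $E>0$): comparing $E\ge u\|q_k\alpha\|\ge u/(2q_{k+1})$ with $M/q$, and using $q\ge\max(c_{k+1}q_k/2,\,q_k/3)$ and $q_{k+1}\le 3l_kPq_k$, gives $(u-2)c_{k+1}\le 12Ml_kP$, whence $u\le C$ or $c_{k+1}\le C$. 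If $c_{k+1}\le C$ then (also $c_k\le 6M+2$ from $E\ge c_k\|q_{k-1}\alpha\|$) $q=b_k+c_{k+1}q_k+c_kq_{k-1}$ — Case II. If $u\le C$, then $q$ is within $Cq_k$ of $b_{k+1}$, so $q\ge q_{k+1}/3$, $M/q\le 3M/q_{k+1}$, and $E\ge c_k\|q_{k-1}\alpha\|\ge 1/(2q_k)$ whenever $c_k\ge 1$ contradicts $|E|\le (3M+1)/q_{k+1}$; hence $c_k=0$ and $q=b_{k+1}-uq_k$ — Case I.

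The main obstacle is this fine step: $E$ is a difference of two quantities of comparable size $\approx 1/q_k$, so crude triangle-inequality bounds collapse, and one must use the precise arithmetic — the Ostrowski digit restrictions, the identity $\|q_{k-1}\alpha\|=a_{k+1}\|q_k\alpha\|+\|q_{k+1}\alpha\|$, and the ``factor two'' gap $a_{k+1}=2l_kP+O(CP)$ coming from $b_{k+1}\approx q_{k+1}/2$ in \eqref{star} — to prove $|E|\gtrsim 1/q_k$ outside the two allowed regimes. Every competing error term is absorbed by the hierarchy $C\ll C_1\ll C_2\ll\hat l_k$, so the real burden is bookkeeping: tracking which constant dominates which at each inequality.
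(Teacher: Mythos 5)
Your proposal is correct and arrives at the same two normal forms, but it is organized genuinely differently from the paper's proof. The paper splits at the outset on the size of $q$: for $q>Cq_k$ it writes $q=b_{k+1}-aq_k+l$ with $0\le l<q_k$, kills $l$ by playing the best-approximation bound $\|l\alpha\|\ge 1/(q_k+q_{k-1})$ against the small terms $\|aq_k\alpha\|$ and $\|\gamma-b_{k+1}\alpha\|$, and then bounds $a$ by solving the quadratic inequality $\frac{a}{2q_{k+1}}\le\frac{M}{b_{k+1}-aq_k}$, whose two roots are exactly your two admissible regimes (this is where \eqref{star}, i.e.\ $b_{k+1}\approx q_{k+1}/2$, enters implicitly); for $b_k\le q\le Cq_k$ it writes $q=b_k+aq_k+bq_{k-1}+l$ and argues analogously. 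You instead expand $q-b_k$ in Ostrowski digits, use the standard lower bound $\|N\alpha\|>\|q_{j+1}\alpha\|$ ($j$ the position of the lowest nonzero digit) to confine the digits to positions $k-2,k-1,k$ --- this step is the exact counterpart of the paper's two ``$l=0$'' claims --- and then replace the quadratic inequality by the product--sum dichotomy $(u-2)c_{k+1}\le 12Ml_kP$ with $u+c_{k+1}\approx l_kP$, together with the identity $\|q_{k-1}\alpha\|=a_{k+1}\|q_k\alpha\|+\|q_{k+1}\alpha\|$ and $a_{k+1}=2l_kP+O(CP)$ to dispose of the $c_{k-1}\ge 1$ digit. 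Both arguments rest on the same inputs (\eqref{GDC1}--\eqref{GDC3}, \eqref{star}, \eqref{Ggrowthqk} and the hierarchy $C\ll C_1\ll C_2$); the paper's route is shorter and avoids Ostrowski machinery, while yours makes the role of $b_{k+1}\approx q_{k+1}/2$ (your ``factor two'' gap) explicit and treats the whole range $b_k\le q<b_{k+1}$ in a single decomposition. The points you defer to bookkeeping --- the Ostrowski digit lower bound, the legitimacy of reading $\|\gamma-q\alpha\|$ as $|(-1)^kE+\theta'|$ (the relevant quantities are of size at most $O(1/q_{k-2})$, so the representative is the right one), the constraint $c_k\le a_k-1$ when $c_{k-1}=1$, and the size estimates for $u,v$ --- all check out under the stated hierarchy of constants.
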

\begin{proof}
Suppose $b_k\leq q<b_{k+1}$ and
\begin{equation}\label{Gqtwo}
    ||\gamma-q\alpha||\leq\frac{M }{ {q}}.
\end{equation}
{\bf Case I:} $q>Cq_k$.

In this case, we   claim that $q=b_{k+1}-aq_k$ for some $a\geq0$.
Otherwise  $q=b_{k+1}-aq_k+l$ for some $1\leq l<q_{k}$.
Thus
\begin{eqnarray*}
  ||\gamma-q\alpha|| &=& ||\gamma-b_{k+1}\alpha+aq_{k}\alpha-l\alpha|| \\
   &\geq &  ||l\alpha|| -||a q_k\alpha||-||\gamma-b_{k+1}\alpha||\\
   &\geq& \frac{1}{q_k+q_{k-1}} -\frac{b_{k+1}}{q_k}\frac{1}{q_{k+1}}-\frac{1}{q_{k+1}}\\
    &\geq& \frac{1}{4q_{k}},
\end{eqnarray*}
where the second inequality holds by \eqref{GDC1}, \eqref{GDC2} and \eqref{Ggammadis1}, and the third inequality holds by the fact $q_{k+1}>C_2q_k$.
This contradicts \eqref{Gqtwo} since  $q\geq Cq_k$.

Now we are in a position to show $0\leq a\leq C$. Suppose $a>C$.
By \eqref{star} and $q=b_{k+1}-aq_k$, one has
\begin{equation}\label{a}
  a\leq \frac{q_{k+1}}{2q_k}+CP.
\end{equation}
By \eqref{Gqtwo}, we have
\begin{equation*}
   ||\gamma-q\alpha||\leq \frac{M}{b_{k+1}-aq_k}
\end{equation*}
and also, using (\ref{a}),
\begin{eqnarray*}
    ||\gamma-q\alpha|| &\geq& ||a q_k \alpha||-||\gamma-b_{k+1}\alpha|| \\
  &\geq& \frac{a}{q_{k+1}+q_{k}}-\frac{1}{q_{k+1}}\\
  &\geq&\frac{a}{2q_{k+1}}.
\end{eqnarray*}
Thus we have
\begin{equation}\label{Gsolve}
 \frac{a}{2q_{k+1}} \leq  \frac{M}{b_{k+1}-aq_k}.
\end{equation}
Solving  quadratic inequality \eqref{Gsolve}, we have by \eqref{star},
\begin{equation}\label{casse1}
    a\geq \frac{b_{k+1}+\sqrt{b_{k+1}^2-8Mq_kq_{k+1}}}{2q_k}=\frac{b_{k+1}}{q_k} +O(1),
\end{equation}
or
\begin{equation}\label{casse2}
    a\leq \frac{b_{k+1}-\sqrt{b_{k+1}^2-8Mq_kq_{k+1}}}{2q_k}=O(1).
\end{equation}
Inequality \eqref{casse1} can not happen since $q=b_{k+1}-aq_k\geq C q_k$.    Inequality \eqref{casse2} does not hold since we assume $a>C$.  This implies Case I.

{\bf Case II:} $ b_k\leq q\leq Cq_k$.
Rewrite $q$ as $q=b_k+aq_k+bq_{k-1}+l$, where $|b q_{k-1}+l|\leq \frac{1}{2}q_k$ and $   |l|<q_{k-1}$.
Notice that $|b|\leq \frac{q_k}{2q_{k-1}}$.

We claim that $l=0$. Indeed, assume  $|l|>0$. Then
\begin{eqnarray*}
  ||\gamma-q\alpha|| &=& ||\gamma-b_k\alpha-aq_k\alpha-bq_{k-1}\alpha-l\alpha|| \\
   &\geq& ||l\alpha||- ||\gamma-b_k\alpha||-||aq_k\alpha||-||bq_{k-1}\alpha||\\
   &\geq& \frac{1}{q_{k-1}+q_{k-2}}-\frac{1}{q_{k}}-\frac{a}{q_{k+1}}- \frac{q_k}{2q_{k-1}}\frac{1}{q_k}\\
   &\geq& \frac{1}{3q_{k-1}}
\end{eqnarray*}
where the second inequality holds by \eqref{GDC1},\eqref{GDC2} and \eqref{Ggammadis1}, and the third inequality holds by the fact $q_{k+1}>C_2q_k$.
This contradicts \eqref{Gqtwo} since  $q\geq b_k$.

In this case ($q\leq Cq_k$), it is immediate  that $0\leq a\leq C$. Thus we only need to prove
$|b|\leq C$.
Since $q=b_{k}+aq_k+bq_{k-1}$ and $q\geq b_k$, we have using also \eqref{star} that
\begin{equation*}
   ||\gamma-q\alpha||\leq \frac{M}{b_k}\leq   \frac{3M}{q_k}
\end{equation*}
and also have
\begin{eqnarray*}
    ||\gamma-q\alpha|| &\geq& ||b q_{k-1} \alpha||-||\gamma-b_k\alpha||-||aq_k \alpha|| \\
  &\geq& \frac{|b|}{q_{k-1}+q_{k}}-\frac{1}{q_{k+1}}-\frac{C}{q_k}
\end{eqnarray*}
Thus we have
\begin{equation*}
 |b|\leq    C.
\end{equation*}

\end{proof}

\begin{proof}[\bf Proof of Theorem \ref{thmexp1}]
Let $\gamma$ be  given by \eqref{Ggammadis}.
Suppose $b_k \leq q< b_{k+1}$ is such that $(p,q)$ are coprime and
\begin{equation*}
    |\gamma-q\alpha+p| \leq \frac{M}{q}.
\end{equation*}
Since $\gamma\neq 0$,
this implies $p=\lfloor q\alpha\rfloor$ and $||\gamma-q\alpha||\leq \frac{M}{q}$.
By Theorem \ref{keythmdi}, we have
$q=b_{k+1}-a q_k$ with $0\leq a\leq C$ or  $q=b_{k}+a q_k+bq_{k-1}$ with $0\leq a\leq C$ and $|b|\leq C$.

We will show that   $(q,\lfloor q\alpha\rfloor)$ is not coprime for all such $q$.

 Without loss of generality, assume $q=b_{k+1}-aq_k$ and $0\leq a\leq C$, the other part of the argument being similar.

 Suppose $l>0$ and $l<C$. By \eqref{Gmpkeven}, \eqref{Gmpkodd} and \eqref{GDC3}, we have for odd  $k$
\begin{equation}\label{Gbalpha1}
    \lfloor l q_k \alpha\rfloor =lp_k -1\equiv l-1\mod P
\end{equation}
and for even $k$
\begin{equation}\label{Gbalpha11}
    \lfloor l q_k \alpha\rfloor =lp_k \equiv 0\mod P.
\end{equation}
Suppose $l<0$ and $|l|<C$. Similarly,
we have for odd  $k$
\begin{equation}\label{Gbalpha1new}
    \lfloor l q_k \alpha\rfloor =lp_k \equiv l \mod P
\end{equation}
and for even $k$
\begin{equation}\label{Gbalpha11new}
    \lfloor l q_k \alpha\rfloor =lp_k -1\equiv -1\mod P.
\end{equation}
Let $\langle x \rangle$ be  the unique number in $[-1/2, 1/2)$
such that $x- \langle x\rangle$ is an integer.
Let $t=a$. Let  $0\leq -j<P$ be such that  $-j\equiv   -a q_k \alpha -\langle  -a q_k \alpha \rangle  \mod P$ and
  $\gamma_j= \langle  -a q_k \alpha \rangle$.

By \eqref{Gbalpha1}-\eqref{Gbalpha11new}, we have $0\leq |t|,|j|\leq C$.

By \eqref{GDC1} and \eqref{GDC2}, one has
\begin{equation}\label{Ggammaj}
|\gamma_j|=||  -a q_k \alpha||\leq \frac{C}{q_{k+1}}.
\end{equation}
By    \eqref{Gbk1t},
we have for all $|t|\leq C_1$,
\begin{equation*}
      b_{k+1} \equiv t \mod p^{t,j},
\end{equation*}
which implies for some $t \in[-C, C]$ (using $t=a$ and \eqref{Gmqk} )
\begin{equation}\label{Gnotcoprime1}
    b_{k+1}-aq_k\equiv 0 \mod p^{t,j}.
\end{equation}
Applying  Lemma \ref{keyle3} and \eqref{Ggammaj}, one has for large $k$,
\begin{equation*}
     \frac{(b_{k+1} -t)}{p^{t,j}} \alpha \in (\frac{j+\gamma_j-t\alpha}{p^{t,j}},\frac{j+1-\gamma_j-t\alpha}{p^{t,j}})\mod \Z.
\end{equation*}
By Lemma \ref{keyle1}, one has for all $|j|\leq C_1$
\begin{equation*}
    \lfloor b_{k+1}\alpha+\gamma_j\rfloor\equiv j\mod p^{t,j}.
\end{equation*}
This implies for some $j\in[-C,C]$,
\begin{equation}\label{Gnotcoprime2}
     \lfloor b_{k+1}\alpha-a q_k\alpha\rfloor\equiv  \lfloor b_{k+1}\alpha+\gamma_j\rfloor- j\equiv 0\mod p^{t,j}.
\end{equation}
Thus by \eqref{Gnotcoprime1} and \eqref{Gnotcoprime2},  we have that $(b_{k+1}-aq_k, \lfloor b_{k+1}-aq_k\rfloor)$ is not coprime.
This implies for such
$\gamma$ given by  \eqref{Ggammadis},
the inequality
 \begin{equation*}
    |\gamma-q\alpha+p|\leq\frac{M }{ {q}}.
 \end{equation*}
only has finitely many coprime solutions  $(p,q)\in \N^2$.
By Remark \ref{Redense},
this completes  the proof.

\end{proof}

Actually, we have proved the following more general result.
\begin{theorem}\label{thmexp2}
For any positive constant $M$, there exist large constants $\bar{C}_1$ and $\bar{C}_2$(depending on $M$) such that the following statement holds:
Let $P=p^1p^2\cdots p^n$, where $n= \bar{C}_1^3+1$. Let $\frac{p_k}{q_k}$ be the continued fraction expansion to $\alpha$.
Let
\begin{eqnarray}
    \Lambda &=& \{\alpha: \text{ there exists some }  S=\{a_1,a_2,\cdots, a_m\}\subset \N \cap[0,P]  \text { with } m\leq \bar{C}_1 \text{ such that} ,  \nonumber \\
   &&  \text{   eventually for all } k, \; q_k,p_k\in S  \mod P \text { and } q_{k+1}\geq  \bar{C}_2 P q_k\}.\nonumber
\end{eqnarray}
Then for any $\alpha\in \Lambda$, there exists a dense uncountable set $ \Omega (\alpha)\subset[0,1)$ such  that
for all $\gamma\in \Omega(\alpha)$,
inequality
 \begin{equation*}
    |\gamma-q\alpha+p|\leq\frac{M }{ {q}}.
 \end{equation*}
only has finitely many coprime solutions  $(p,q)\in \N^2$.
\end{theorem}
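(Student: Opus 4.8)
The plan is to observe that Theorem \ref{thmexp2} is nothing but a repackaging of the construction already carried out in the proof of Theorem \ref{thmexp1}, with the role of the concrete $\alpha$ built from $a_k=\hat l_k P$ now played by an arbitrary $\alpha\in\Lambda$. First I would check that every $\alpha\in\Lambda$ enjoys the three structural facts that were actually used: (i) there is a finite residue set $S\subset\N\cap[0,P]$ with $|S|\le\bar C_1$ so that eventually $q_k,p_k\in S\bmod P$; (ii) the gap condition $q_{k+1}\ge\bar C_2 P q_k$ holds eventually, which is the analogue of \eqref{Ggrowthqk}; and (iii) consequently the relevant denominators $lq_k$ and numerators $lp_k$ for $|l|\le C$ occupy only boundedly many residues mod $p^{t,j}$, exactly as in \eqref{Gbalpha1}–\eqref{Gbalpha11new} — the only difference being that there we had the clean identities $q_k\equiv1$, $p_{2k}\equiv0$, $p_{2k+1}\equiv1\bmod P$, whereas now we merely know $q_k,p_k$ range over the fixed finite set $S$. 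Since $n=\bar C_1^3+1$ primes suffice to index all pairs $(t,j)$ with $|t|,|j|$ bounded by a constant depending on $\bar C_1$ via the Chinese Remainder Theorem, the assignment $(t,j)\mapsto p^{t,j}$ goes through verbatim.

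Next I would redo the three lemmas Lemma \ref{keyle2}, Lemma \ref{keyle3} and Theorem \ref{keythmdi} in this generality. Lemma \ref{lema4.2} is purely about $\alpha$ having large partial quotients (which we may assume by passing to a tail of $\alpha\in\Lambda$, since $||\gamma-q\alpha||'$ is governed only by the asymptotics), so it is unchanged; hence Lemma \ref{keyle2} still produces the base point $b_1$ with $b_1\equiv t\bmod p^{t,j}$ and $\{(b_1-t)\alpha/p^{t,j}\}$ in a slightly shrunk interval. The inductive construction \eqref{keyle}, \eqref{star} of $b_{k+1}=b_k+l_k P q_k$ with $|b_{k+1}-q_{k+1}/2|\le CPq_k$ only uses \eqref{Ggrowthqk}, which is property (ii); and Lemma \ref{keyle3} only uses \eqref{GDC1}, \eqref{GDC2} together with $q_k\equiv1\bmod P$ — here I must replace the last by the weaker statement $q_k\in S\bmod P$ and carry the (still finite) set of possible residues through, which costs nothing since the interval-preservation estimate $||l_k P^{t,j}q_k\alpha||\le 1/(2q_k)$ is unaffected. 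Likewise Theorem \ref{keythmdi}, which forces any coprime solution's $q$ into the form $b_{k+1}-aq_k$ or $b_k+aq_k+bq_{k-1}$ with $|a|,|b|\le C$, uses only the three-term recurrence, \eqref{GDC1}, \eqref{GDC2}, the gap condition and $||\gamma-b_k\alpha||\le1/q_k$; none of these depend on the precise residues.

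With these in hand, the endgame of the proof of Theorem \ref{thmexp1} applies: for $q=b_{k+1}-aq_k$ (or the Case II form) one picks $t=a$ and the residue $j$ determined by $-j\equiv -aq_k\alpha-\langle-aq_k\alpha\rangle\bmod P$, notes $|t|,|j|\le C\le C_1$ by property (iii), and then \eqref{Gbk1t}-type congruence gives $q\equiv0\bmod p^{t,j}$ while Lemma \ref{keyle1} plus Lemma \ref{keyle3} gives $\lfloor q\alpha\rfloor\equiv0\bmod p^{t,j}$, so $(q,\lfloor q\alpha\rfloor)$ is not coprime — contradiction, hence only finitely many coprime solutions. Density and uncountability of $\Omega(\alpha)$ follow as in Remark \ref{Redense} by varying $b_1$ and the $l_k$. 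The main obstacle — really the only place requiring genuine care rather than transcription — is bookkeeping the finite residue set $S$ in place of the single residue $1$ throughout Lemma \ref{keyle3} and in the final congruence step: one must verify that the number of residues of $d_kq_k+r_kq_{k-1}$ and $\lfloor(d_kq_k+r_kq_{k-1})\alpha\rfloor$ mod $p^{t,j}$ stays bounded by a constant depending only on $\bar C_1$ (not on $k$), which is why the definition of $\Lambda$ insists $|S|\le\bar C_1$ and why we take $n=\bar C_1^3+1$ primes; once that counting is pinned down, everything else is identical to the special case already proved.
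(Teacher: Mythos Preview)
Your proposal is correct and is essentially the paper's own approach: the paper gives no separate proof of Theorem \ref{thmexp2} but simply states, after completing the proof of Theorem \ref{thmexp1}, that ``Actually, we have proved the following more general result,'' leaving implicit exactly the verification you spell out. Your step-by-step check that Lemmas \ref{lema4.2}, \ref{keyle2}, \ref{keyle3} and Theorem \ref{keythmdi} survive when the specific residues $q_k\equiv1$, $p_{2k}\equiv0$, $p_{2k+1}\equiv1\bmod P$ are replaced by membership in a fixed finite set $S$ with $|S|\le\bar C_1$ (and your correct identification of the final congruence bookkeeping as the only place requiring care, which explains the $\bar C_1^3+1$ primes) is precisely what the paper intends the reader to supply.
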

\begin{remark}
$\Lambda$ is a dense uncountable set.
\end{remark}
 \section*{Acknowledgments}
  We thank  Alan Haynes   for comments on the previous version of the
  manuscript. W.L. was supported by the AMS-Simons Travel
Grant 2016-2018.  This research was
 supported by NSF DMS-1401204, DMS-1901462 and NSF DMS-1700314.

\footnotesize
 \bibliographystyle{abbrv} 

\end{document}